\tikzset{
every node/.style={circle, inner sep=2pt}
}
\pgfplotsset{yticklabel style={
        /pgf/number format/fixed,
        /pgf/number format/precision=5
},
scaled y ticks=false}
\pgfplotsset{
    legend entry/.initial=,
    every axis plot post/.code={%
        \pgfkeysgetvalue{/pgfplots/legend entry}\tempValue
        \ifx\tempValue\empty
            \pgfkeysalso{/pgfplots/forget plot}%
        \else
            \expandafter\addlegendentry\expandafter{\tempValue}%
        \fi
    },
}
\newtheorem{theorem}{Theorem}
\newtheorem{proposition}[theorem]{Proposition}
\newtheorem{corollary}[theorem]{Corollary}
\theoremstyle{definition}
\newcommand{\diag}{\operatorname{diag}}
\newcommand{\SNF}{\operatorname{SNF}}
\newcommand{\SP}{\operatorname{sp}}
\newcommand{\IN}{\operatorname{in}}
\pgfplotsset{compat=1.14}
\begin{document}

\title{Enumeration of cospectral and coinvariant graphs}
\author{Aida Abiad$^{a,b}$ and Carlos A. Alfaro$^c$
\\ \\
{\small $^a$Department of Mathematics and Computing Science} \\
{\small Eindhoven University of Technology, Eindhoven, The Netherlands}\\
{\small $^b$Department of Mathematics: Analysis, Logic and Discrete Mathematics} \\
{\small Ghent University, Ghent, Belgium}\\
{\small {\tt
a.abiad.monge@tue.nl}} \\
{\small $^c$ Banco de M\'exico} \\
{\small Mexico City, Mexico}\\
{\small {\tt
carlos.alfaro@banxico.org.mx}} \\
}
\date{}

\maketitle

\begin{abstract}
We present enumeration results on the
number of connected graphs up to 10 vertices for which there is at least one other graph with the same spectrum (a cospectral mate), or at least one other graph with the same Smith normal form (coinvariant mate) with respect to several matrices associated to a graph.
The present data give some
indication that possibly the Smith normal form  of  the  distance  Laplacian  and  the  signless  distance  Laplacian matrices could be a finer invariant to distinguish graphs in cases where other algebraic invariants, such as those derived from the spectrum, fail. Finally, we show a new graph characterization using the Smith normal form of the signless distance Laplacian matrix. 
\end{abstract}

{\bf Keywords:} Graph; Eigenvalues; Invariant factors; Smith normal form; Enumeration


\section{Introduction}

Spectral graph theory aims to understand to what extent graphs are characterized by their spectra. Starting from the eigenvalues of a matrix associated to a graph, it seeks to deduce combinatorial properties of the graph. For this, we associate a graph $G$ to a matrix $M$ and analyze the eigenvalues of $M$. These eigenvalues are called the \emph{spectrum} of $G$ with respect to the matrix $M$, and its multiset
is denoted by $M$-spectrum(G). $M$-\emph{cospectral graphs} are graphs that share $M$-spectrum. A graph $G$ is \emph{determined by its $M$-spectrum} ($M$-DS for short) if only isomorphic graphs are cospectral with $G$.

In connection with the graph isomorphism problem, it is of interest what fraction of all graphs is uniquely determined by its spectrum. Haemers
conjectured that the fraction of graphs on $n$ vertices with a $M$-cospectral mate tends to zero as $n$ tends to infinity. Numerical data for $n\leq 9$ was given by Godsil and McKay \cite{gm}, for $n = 10, 11$ by Haemers and Spence \cite{HS2004} and for $n=12$ by Brouwer and Spence \cite{bs}. Aouchiche and Hansen \cite{ah} presented a numerical study in which they looked into the spectra of the distance, distance Laplacian and distance signless Laplacian matrices of all the connected graphs on up to 10 vertices. Recently, Pinheiro, Souza and Trevisan \cite{pst2020} provided some numerical evidence that the complementary spectrum of a graph (the \emph{complementary spectrum} of a graph $G$ is the set of the complementary eigenvalues of the adjacency matrix of $G$) distinguishes more graphs than other standard graph spectra. However, the authors also showed that it is hard to compute the complementary spectrum.

The question arises whether it is possible to define a matrix $M$ of $G$ in a (not so sensible) way such that every graph becomes $M$-DS. This is indeed the case, as it was shown in \cite[Section 2.5]{vDH}. However, in this case it is more work to check cospectrality of the matrices
than testing isomorphism. If there would be an easily computable matrix $M$ for which every graph becomes $M$-DS, the graph isomorphism problem would be solved. Hence, when $M$ is one of the commonly used matrices associated to graphs (adjacency, Laplacian, signless Laplacian, normalized Laplacian, distance matrices), one can say that there is not such a matrix $M$ for which all graphs are $M$-DS, since there exist many examples of non-isomorphic graphs that share the same $M$-spectrum. This leaves open the possibility of amplifying spectra with the use of more refined representations for obtaining more faithful graph information.

The main goal of this article is to propose a new way of representing a graph using the Smith normal form (SNF) of certain distance matrices. We  provide some numerical evidence that this new algebraic graph representation may do a better job in distinguishing graphs. For this we first need to recall some definitions.
A matrix $M$ is said to be \emph{equivalent} to $N$ if there exist unimodular matrices $U$ and $V$ with entries in $\mathbb{Z}$ such that $M=UNV$.
The Smith normal form of a integer matrix $M$, denoted by $\SNF(M)$, is the unique diagonal matrix $\diag(f_1,\dots,f_r,0,\dots,0)$ equivalent to $M$ such that $r=rank(M)$ and $f_i|f_j$ for $i<j$.
The \emph{invariant factors} (or \emph{elementary divisors}) of $M$ are the integers in the diagonal of the $\SNF(M)$.
If $M$ is an integer symmetric matrix associated to a graph, then we say that two graphs $G$ and $H$ are $M$-{\it coinvariant} if the SNFs of $M(G)$ and $M(H)$, computed over $\mathbb{Z}$, are the same.
Conivariant graphs were introduced in \cite{vince}. Note that related to the SNF there is the $p$-\emph{rank}, i.e., the rank of the matrix considered over the finite field $\mathbb{F}_p$, which is the number of invariant factors not divisible by $p$. We should note that the $p$-rank has also been used in the literature to distinguish graphs. Just to name a few, the $2$-rank was used joint with the spectrum to characterize symplectic graphs over $\mathbb{F}_2$ \cite{Peeters} and was used to distinguish strongly regular graphs with the same parameters as the symplectic graph \cite{ah2016}. Some relevant $p$-ranks joint with the spectrum was used to characterize distance-regular graphs \cite{Peeters2002}.

In particular, in this work we study if there is a matrix $M$ (say adjacency, Laplacian, signless Laplacian, etc.) whose SNF distinguishes more graphs. Broadly speaking, the
idea is to verify whether the portion of graphs that have a $M$-coinvariant mate is smaller than the portion of graphs having a $M$-cospectral mate for a particular matrix $M$. 
Cospectrality and coinvariancy both play an important role in the famous graph isomorphism problem. Actually, it is not yet known if testing graph isomorphism is a hard problem or not, whether determining whether two graphs are cospectral or coinvariant can be done in polynomial time \cite{SNFP,kannan}. It is also known that testing coinvariancy is experimentally faster than testing cospectrality \cite{akm}. Thus, one can focus on testing isomorphism among coinvariant graphs.

Our results show that the SNF of the signless distance Laplacian matrix provide a way of representing graphs which does a better job than the spectrum in distinguishing them. The distance Laplacian and the signless distance Laplacian matrices have received quite a lot of attention over the last years \cite{ah2014,ah2013,bdhlrsy2019,L2020,DAH2018,np2014}. This article is a sequel to the work by Aouchiche and Hansen \cite{ah}. Numerical data on the number of cospectral and coinvariant graphs is given for several matrices, and we also take the opportunity to correct an earlier value. This paper also complements the work by Haemers and Spence  \cite{HS2004}, Lepović \cite{L1998} and Godsil and McKay \cite{GM1982} on enumerating cospectral graphs.

In particular, we extend the computer enumeration for cospectral graphs of \cite{GM1982}, \cite{L1998}, \cite{HS2004} and \cite{ah} to all connected graphs on at most 10 vertices that have at least a cospectral mate with respect to the distance Laplacian matrix and the signless distance Laplacian matrix. We also enumerate graphs with at most 10 vertices which have a coinvariant mate for several associated matrices. 
Finally, we present a novel way to show a graph characterization using the SNF of the distance Laplacian and signless distance Laplacian matrix, illustrating the power of this new proposed graph invariant.

\section{Enumeration}\label{sec:enumaration}

In order to make our enumeration results for several matrices comparable with the data obtained for the graph distance matrices, we focus on connected graphs. Denote by $\mathcal{G}_n$ the set of connected graphs with $n$ vertices.
Given a connected graph $G$, we will study the following associated matrices: the adjacency matrix $A(G)$, the Laplacian matrix $L(G)$, the distance matrix $D(G)$, the signless Laplacian matrix $Q(G)$, the distance Laplacian matrix $D^L(G)$ and the signless distance Laplacian matrix $D^Q(G)$.

Let $\mathcal{G}^{sp}_n(M)$ be the set of graphs in $\mathcal{G}_n$ which have at least one cospectral mate in $\mathcal{G}_n$ with respect to an associated matrix $M$. Table~\ref{Tab:cospectralALQD} provides the number of cospectral mates of connected graphs with respect to several associated matrices.

\begin{table}[H]
    \centering
	{\small
    \begin{tabular}{ccccccc}
		\hline
        $n$ & 5 & 6 & 7 & 8 & 9 & 10\\
        \hline
        $|\mathcal{G}_n|$ & 21 & 112 & 853 & 11,117 & 261,080 & 11,716,571\\
        \hline
        $|\mathcal{G}^{sp}_n(A)|$ & 0 & 2 & 63 & 1,353 & 46,930 & 2,462,141 \\
        $|\mathcal{G}^{sp}_n(L)|$ & 0 & 4 & 115 & 1,611 & 40,560 & 1,367,215 \\
        $|\mathcal{G}^{sp}_n(Q)|$ & 2 & 10 & 80 & 1,047 & 17,627 & 615,919  \\
        $|\mathcal{G}^{sp}_n(D)|$ & 0 & 0 & 22 & 658 & 25,058 & 1,389,986   \\
        \hline
	\end{tabular}
	}
\caption{Number of connected graphs with at least one cospectral mate for the adjacency, Laplacian, signless Laplacian and distance matrices.}
	\label{Tab:cospectralALQD}
\end{table}

Analogously, let $\mathcal{G}^{in}_n(M)$ be the set of graphs in $\mathcal{G}_n$ which have at least one coinvariant mate in $\mathcal{G}_n$ with respect to an associated matrix $M$. Table 	\ref{Tab:coinvariantALQD} shows the enumeration of $\mathcal{G}^{in}_n(M)$ for several associated matrices.

\begin{table}[H]
    \centering
	{\small
    \begin{tabular}{cccccccc}
		\hline
        $n$ & 4 & 5 & 6 & 7 & 8 & 9 & 10\\
        \hline
        $|\mathcal{G}_n|$ & 6 & 21 & 112 & 853 & 11,117 & 261,080 & 11,716,571\\
        \hline
        $|\mathcal{G}^{in}_n(A)|$ & 4 & 20 & 112 & 853 & 11,117 & 261,080 & 11,716,571 \\
        $|\mathcal{G}^{in}_n(L)|$ & 2 & 8 & 57 & 526 & 8,027 & 221,834 & 11,036,261 \\
        $|\mathcal{G}^{in}_n(Q)|$ & 2 & 11 & 78 & 620 & 7,962 & 201,282 & 10,086,812 \\
        $|\mathcal{G}^{in}_n(D)|$ & 2 & 15 & 102 & 835 & 11,080 & 260,991 & 11,716,249\\
        \hline
	\end{tabular}
	}
\caption{Number of connected graphs with at least one coinvariant mate for the adjacency, Laplacian, signless Laplacian and distance matrices.}
	\label{Tab:coinvariantALQD}
\end{table}

Extensive research has been devoted to understand cospectral graphs, but much less has been dedicated to understand coinvariant mates and its potential to characterize graphs.
The reason for this could be that for matrices $A$, $L$, $Q$ and $D$, there is a large proportion of connected graphs having a $M$-coinvariant mate, as Figure \ref{fig:spectruminvariantALQD} shows. We follow \cite{CRS} in defining the \emph{spectral uncertainty} $\SP_n(M)$ with respect to $M$ as the ratio $|\mathcal{G}^{sp}_n(M)|/|\mathcal{G}_n|$, and the \emph{invariant uncertainty} $\IN_n(M)$ with respect to $M$ as the ratio $|\mathcal{G}^{in}_n(M)|/|\mathcal{G}_n|$.

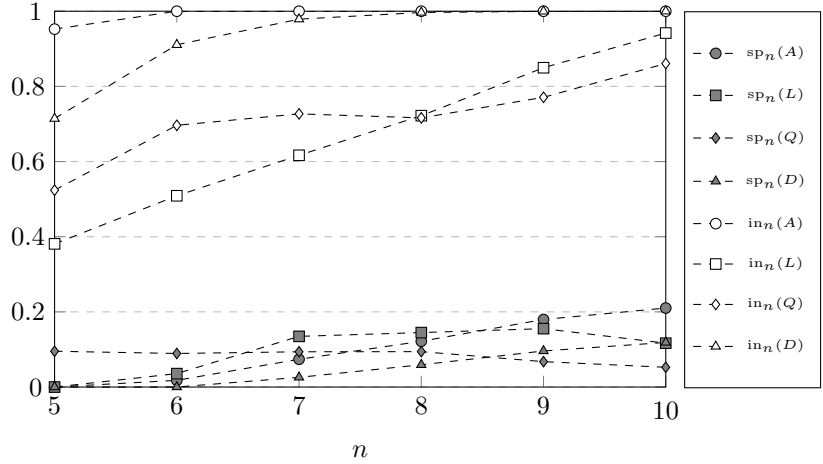
\begin{figure}[H]
    \centering
    \begin{tikzpicture}[trim axis left]
    
    \begin{axis}[
        scale only axis,
        title={},
        xlabel={$n$},
        width=\textwidth-4cm, 
        height=5cm,
        xmin=5, xmax=10,
        ymin=0.0, ymax=1.0,
        xtick={5,6,7,8,9,10},
        legend style ={ 
            row sep=-0.4cm,
            at={(1.03,1)}, 
            anchor=north west,
            draw=black, 
            fill=white,
            align=left
        },
        ymajorgrids=true,
        grid style=dashed,
        legend columns=1
    ]
     
    \addplot[
        dashed,
        every mark/.append style={solid, fill=gray},
        mark=*,
        legend entry={\tiny $\SP_n(A)$}
        ]
        coordinates {
        (5,0.000000000000000)(6,0.0178571428571429)(7,0.0738569753810082)(8,0.121705496087074)(9,0.179753332311935)(10,0.210141772708073)
        };
     
    \addplot[
        dashed,
        every mark/.append style={solid, fill=gray},
        mark=square*,
        legend entry={\tiny $\SP_n(L)$}
        ]
        coordinates {
        (5,0.000000000000000)(6,0.0357142857142857)(7,0.134818288393904)(8,0.144913196006117)(9,0.155354680557683)(10,0.116690710959717)
        };
    
    \addplot[
        dashed,
        every mark/.append style={solid, fill=gray},
        mark=diamond*,
        legend entry={\tiny $\SP_n(Q)$}
        ]
        coordinates {
        (5,0.0952380952380952)(6,0.0892857142857143)(7,0.0937866354044549)(8,0.0941800845551858)(9,0.0675157039987743)(10,0.0525681959337762)
        };
        
    \addplot [
        dashed,
        every mark/.append style={solid, fill=gray},
        mark=triangle*,
        legend entry={\tiny $\SP_n(D)$}
        ]
        coordinates {
        (5,0.000000000000000)(6,0.000000000000000)(7,0.0257913247362251)(8,0.0591886300260862)(9,0.0959782442163322)(10,0.118634197667560)
        };
     
    \addplot[
        dashed,
        every mark/.append style={solid, fill=white},
        mark=*,
        legend entry={\tiny $\IN_n(A)$}
        ]
        coordinates {
        (5,0.952380952380952)(6,1.00000000000000)(7,1.00000000000000)(8,1.00000000000000)(9,1.00000000000000)(10,1.00000000000000)
        };
     
    \addplot[
        dashed,
        every mark/.append style={solid, fill=white},
        mark=square*,
        legend entry={\tiny $\IN_n(L)$}
        ]
        coordinates {
        (5,0.380952380952381)(6,0.508928571428571)(7,0.616647127784291)(8,0.722047314923091)(9,0.849678259537307)(10,0.941936083517951)
        };
    
    \addplot[
        dashed,
        every mark/.append style={solid, fill=white},
        mark=diamond*,
        legend entry={\tiny $\IN_n(Q)$}
        ]
        coordinates {
        (5,0.523809523809524)(6,0.696428571428571)(7,0.726846424384525)(8,0.716200413780696)(9,0.770959092998315)(10,0.860901367814867)
        };
        
    \addplot [
        dashed,
        every mark/.append style={solid, fill=white},
        mark=triangle*,
        legend entry={\tiny $\IN_n(D)$}
        ]
        coordinates {
        (5,0.714285714285714)(6,0.910714285714286)(7,0.978898007033998)(8,0.996671763965098)(9,0.999659108319289)(10,0.999972517556544)
        };

    \end{axis}
\end{tikzpicture}
    \caption{The fraction of graphs on $n$ vertices that have at least one cospectral mate with respect to a certain associated matrix is denoted as $sp$. The fraction of graphs on $n$ vertices with respect to a certain associated matrix that have at least one coinvariant mate is denoted as $in$.}
    \label{fig:spectruminvariantALQD}
\end{figure}

Table \ref{Tab:codeterminantaldistanceLaplacian} presents the number of connected graphs having at least one cospectral or at least one coinvariant mate for the distance Laplacian and the signless distance Laplacian matrices.
Here, $|\mathcal{G}^{sp-in}_n(M)|$ denotes the number of graphs with a cospectral mate which is also a coinvariant mate with respect to the corresponding matrix $M$. Aouchiche and Hansen \cite{ah} 
enumerated cospectral graphs for the distance, distance Laplacian and distance signless Laplacian matrices of all the connected graphs up to 10 vertices. While most of their results are consistent with ours, in Table \ref{Tab:codeterminantaldistanceLaplacian} we obtain 20455 cospectral graphs with 9 vertices with respect to the distance Laplacian matrix, while they reported that there are 19778 of such graphs.

\begin{table}[H]
    \centering
	{
    \begin{tabular}{lcccccc}
		\hline
        $n$ & 5 & 6 & 7 & 8 & 9 & 10\\
        \hline
        $|\mathcal{G}^{sp}_n(D^L)|$  & 0 & 0 & 43 & 745 & 20,455 & 787,851\\
        $|\mathcal{G}^{in}_n(D^L)|$  &   0 &   0 &  18 &   455 & 16,505 & 642,002\\
        $|\mathcal{G}^{sp-in}_n(D^L)|$ &   
        0 & 0 & 14 & 435 & 16,006 & 611,987\\
        $|\mathcal{G}^{sp}_n(D^Q)|$ & 2 & 6 & 38 & 453 & 8,168 & 319,324\\
        $|\mathcal{G}^{in}_n(D^Q)|$ &   2 & 4 & 20 & 259 & 7,444 &  264,955\\
        $|\mathcal{G}^{sp-in}_n(D^Q)|$ & 2 & 4 & 20 & 243 & 6,676 & 255,964\\
        \hline
	\end{tabular}
	}
\caption{Number of connected graphs with a cospectral or a coinvariant mate for the distance Laplacian and the signless distance Laplacian matrices.}
	\label{Tab:codeterminantaldistanceLaplacian}
\end{table}


 Figure~\ref{fig:spectruminvariantQDLDQ} displays the spectral and the invariant uncertainty for $D^L$ and $D^Q$. We also include the spectral uncertainty for $Q$, since according to Table~\ref{Tab:cospectralALQD}, this would be the best invariant for distinguishing graphs using the spectrum. According to our results, the SNF of $D^Q$ performs better than the spectrum for distinguishing graphs for all considered matrices. We should also note that there is no significant improvement when both the spectrum and the SNF are used together, as the parameter $\mathcal{G}^{sp-in}_n(M)$ indicates in Table \ref{Tab:codeterminantaldistanceLaplacian}, thus this has not been added in   Figure~\ref{fig:spectruminvariantQDLDQ}.

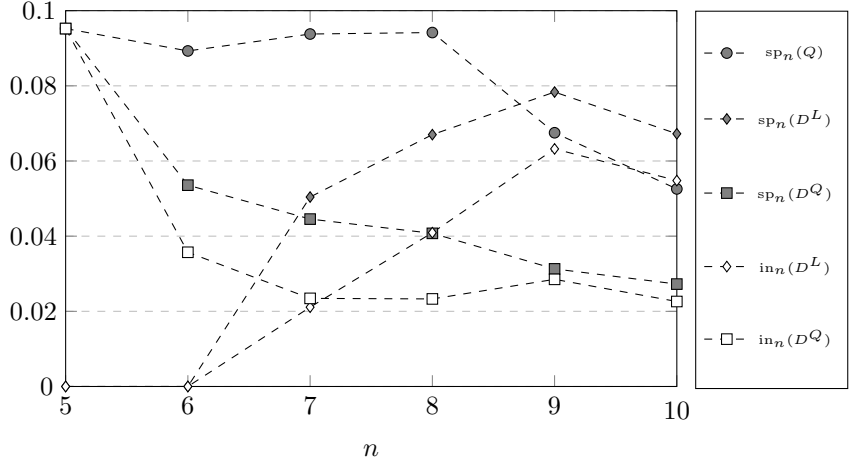
\begin{figure}[H]
    \centering
    \begin{tikzpicture}[trim axis left]
    \begin{axis}[
        scale only axis,
        title={},
        xlabel={$n$},
        width=\textwidth-4cm, 
        height=5cm,
        xmin=5, xmax=10,
        ymin=0.0, ymax=0.1,
        xtick={5,6,7,8,9,10},
        legend style ={ 
            row sep=-0.2cm,
            at={(1.03,1)}, 
            anchor=north west,
            draw=black, 
            fill=white,
            align=left
        },
        ymajorgrids=true,
        grid style=dashed,
        legend columns=1
    ]
     
    \addplot[
        dashed,
        every mark/.append style={solid, fill=gray},
        mark=*,
        legend entry={\tiny $\SP_n(Q)$}
        ]
        coordinates {
        (5,0.0952380952380952)(6,0.0892857142857143)(7,0.0937866354044549)(8,0.0941800845551858)(9,0.0675157039987743)(10,0.0525681959337762)
        };
     
     \addplot [
        dashed,
        every mark/.append style={solid, fill=gray},
        mark=diamond*,
        legend entry={\tiny $\SP_n(D^L)$}
        ]
        coordinates {
        (5,0.000000000000000)(6,0.000000000000000)(7,0.0504103165298945)(8,0.0670144823243681)(9,0.0783476329094530)(10,0.0672424551517675)
        };
    
    \addplot [
        dashed,
        every mark/.append style={solid, fill=gray},
        mark=square*,
        legend entry={\tiny $\SP_n(D^Q)$}
        ]
        coordinates {
        (5,0.0952380952380952)(6,0.0535714285714286)(7,0.0445486518171161)(8,0.0407484033462265)(9,0.0312854297533323)(10,0.0272540489875408)
        };
    
    \addplot [
        dashed,
        every mark/.append style={solid, fill=white},
        mark=diamond*,
        legend entry={\tiny $\IN_n(D^L)$}
        ]
        coordinates {
        (5,0.000000000000000)(6,0.000000000000000)(7,0.0211019929660023)(8,0.0409283079967617)(9,0.0632181706756550)(10,0.0547943592028760)
        };
    
    \addplot [
        dashed,
        every mark/.append style={solid, fill=white},
        mark=square*,
        legend entry={\tiny $\IN_n(D^Q)$}
        ]
        coordinates {
        (5,0.0952380952380952)(6,0.0357142857142857)(7,0.0234466588511137)(8,0.0232976522443105)(9,0.0285123333844032)(10,0.0226136981545198)
        };
    
    \end{axis}
    \end{tikzpicture}
    \caption{The fraction of graphs on $n$ vertices that have at least one cospectral mate with respect to a certain associated matrix is denoted as $sp$. The fraction of graphs on $n$ vertices with respect to a certain associated matrix that have at least one coinvariant mate is denoted as $in$.}
    \label{fig:spectruminvariantQDLDQ}
\end{figure}

In this work we also tested the discrimination power of the $p$-rank on distinguishing graphs. However, since the $p$-rank can take values from $0$ to $n$, in general, it seems not such a good graph invariant. Thus we performed an enumeration of  graphs with the same SNF for the matrices introduced above over $F_p$ with $p\in{2,3,5,7}$. We used this since the SNF of a matrix $M$ over $F_p$ is a finer invariant than the $p$-rank. The enumeration results showed a clear tendency to claim that, for any matrix $M\in\{A,L,Q,D,D^L,D^Q\}$, almost all graphs on $n$ vertices have another graph with the same SNF of $M$ over $F_p$. Thus, we decided not to include these numerical results on the tables.

Aouchiche and Hansen \cite{ah} also explored how to improve the spectral uncertainty by considering two and three matrices together.
Analogously as it was done in \cite{ah}, in Table~\ref{Tab:invariant(sign)distanceLaplacian} we show the number of cospectral and coinvariant graphs when two matrices are considered.
Let $\mathcal{G}^{sp}_n(M, N)$ be the set of graphs in $\mathcal{G}_n$ which have a cospectral mate in $\mathcal{G}_n$ with respect to an associated matrices $M$ and $N$, and let $\mathcal{G}^{in}_n(M, N)$ be the set of graphs in $\mathcal{G}_n$ which have a coinvariant mate in $\mathcal{G}_n$ with respect to an associated matrices $M$ and $N$.
Thus, $\SP_n(M, N)=|\mathcal{G}^{sp}_n(M, N)|/|\mathcal{G}_n|$ and $\IN_n(M, N)=|\mathcal{G}^{in}_n(M, N)|/|\mathcal{G}_n|$.

\begin{table}[H]
	
	\centering
    \begin{tabular}{lcccccc}
		\hline
        $n$ & 7 & 8 & 9 & 10\\
        \hline
        $|\mathcal{G}^{sp}_n(D^L, D^Q)|$ & 0 & 90 & 1,965 & 61,909\\
        $|\mathcal{G}^{in}_n(D^L, D^Q)|$ & 0 & 44 & 1,447 & 46,239\\
        $|\mathcal{G}^{sp}_n(D, D^L)|$ & 0 & 0 & 32 & 9,449 \\
        $|\mathcal{G}^{in}_n(D, D^L)|$ & 0 & 32 & 1,770 & 92,915\\
        $|\mathcal{G}^{sp}_n(D, D^Q)|$ & 0 & 0 & 0 & 7,712 \\
        $|\mathcal{G}^{in}_n(D, D^Q)|$ & 0 & 20 & 432 & 24,517\\
        $|\mathcal{G}^{sp}_n(D, D^L, D^Q)|$ & 0 & 0 & 0 & 7,622 \\
        $|\mathcal{G}^{in}_n(D, D^L, D^Q)|$ & 0 & 0 & 138 & 12,246\\
        \hline
	\end{tabular}
	
\caption{Number of connected graphs with a cospectral and a coinvariant mate for the signless distance Laplacian and distance Laplacian matrices.}
	\label{Tab:invariant(sign)distanceLaplacian}
\end{table}

Figure~\ref{fig:spectruminvarianttwomatrices} shows the spectral and invariant uncertainty of the pairs of matrices obtained in Table~	\ref{Tab:invariant(sign)distanceLaplacian}. 
For the pair $(D^L,D^Q)$, we see and advantage in considering the SNF.
But for the other pairs, we observe that the spectrum is much better.
There is a clear improvement by taking the spectrum of the distance matrix together with the spectrum of either $D^L$ or $D^Q$ to the same obtained by the SNF.
This will also stand in the following analysis.

\begin{figure}[H]
    \centering
    \begin{tikzpicture}[trim axis left]
    \begin{axis}[
        scale only axis,
        title={},
        xlabel={$n$},
        width=\textwidth-4cm, 
        height=5cm,
        xmin=7, xmax=10,
        ymin=0.0, ymax=0.0081,
        xtick={5,6,7,8,9,10},
        legend style ={ 
            row sep=-1.1cm,
            at={(1.03,1)}, 
            anchor=north west,
            draw=black, 
            fill=white,
            align=left
        },
        ymajorgrids=true,
        grid style=dashed,
        legend columns=1
    ]
    
    \addplot [
        dashed,
        every mark/.append style={solid, fill=gray},
        mark=triangle*,
        legend entry={\tiny $\SP_n(D^L, D^Q)$}
        ]
        coordinates {
        (5,0.000000000000000)(6,0.000000000000000)(7,0.000000000000000)(8,0.00809570927408474)(9,0.00752642868086410)(10,0.00528388382573707)
        };
    
    \addplot [
        dashed,
        every mark/.append style={solid, fill=white},
        mark=triangle*,
        legend entry={\tiny $\IN_n(D^L, D^Q)$}
        ]
        coordinates {
        (5,0.000000000000000)(6,0.000000000000000)(7,0.000000000000000)(8,0.00395790231177476)(9,0.00554236249425463)(10,0.00394646181037097)
        };
    
    \addplot [
        dashed,
        every mark/.append style={solid, fill=gray},
        mark=*,
        legend entry={\tiny $\SP_n(D,D^L,)$}
        ]
        coordinates {
        (5,0.000000000000000)(6,0.000000000000000)(7,0.000000000000000)(8,0.000000000000000)(9,0.000122567795311782)(10,0.000806464621773725)
        };
    
    \addplot [
        dashed,
        every mark/.append style={solid, fill=white},
        mark=*,
        legend entry={\tiny $\IN_n(D,D^L)$}
        ]
        coordinates {
        (5,0.000000000000000)(6,0.000000000000000)(7,0.000000000000000)(8,0.00287847440856346)(9,0.00677953117818293)(10,0.00793022122257442)
        };
    
    \addplot [
        dashed,
        every mark/.append style={solid, fill=gray},
        mark=square*,
        legend entry={\tiny $\SP_n(D, D^Q)$}
        ]
        coordinates {
        (5,0.000000000000000)(6,0.000000000000000)(7,0.000000000000000)(8,0.000000000000000)(9,0.000000000000000)(10,0.000658213055679857)
        };
    
    \addplot [
        dashed,
        every mark/.append style={solid, fill=white},
        mark=square*,
        legend entry={\tiny $\IN_n(D, D^Q)$}
        ]
        coordinates {
        (5,0.000000000000000)(6,0.000000000000000)(7,0.000000000000000)(8,0.00179904650535216)(9,0.00165466523670905)(10,0.00209250641676648)
        };
    
    \addplot [
        dashed,
        every mark/.append style={solid, fill=black},
        mark=diamond*,
        legend entry={\tiny $\SP_n(D, D^L,D^Q)$}
        ]
        coordinates {
        (5,0.0)(6,0.0)(7,0.0)(8,0.0)(9,0.0)(10,0.00065053)
        };
        
    \end{axis}
\end{tikzpicture}
    \caption{The fraction of graphs on $n$ vertices that have at least one cospectral mate with respect to a certain associated matrix is denoted as sp. The fraction of graphs on $n$ vertices with respect to a certain associated matrix that have at least one coinvariant mate is denoted as in.}
    \label{fig:spectruminvarianttwomatrices}
\end{figure}
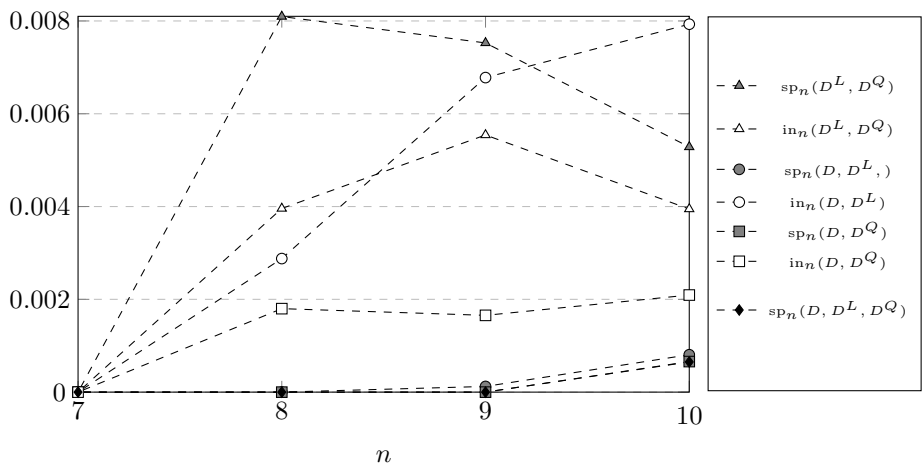

Let $|\mathcal{G}^{sp-in}_n(M,N)|$ denote the number of graphs with a cospectral mate with respect to the matrix $M$ that is also a coinvariant mate with respect to the matrix $N$.
In Table~\ref{Tab:cospectral-invarianttwomatrices}, we compute $|\mathcal{G}^{sp-in}_{10}(M,N)|$ for all possible pairs of associated matrices. The lowest values are $|\mathcal{G}^{sp-in}_{10}(A, D^L)|$, $|\mathcal{G}^{sp-in}_{10}(D, D^L)|$, $|\mathcal{G}^{sp-in}_{10}(A, D^Q)|$ and $|\mathcal{G}^{sp-in}_{10}(A, D^Q)|$.
It is interesting to observe that the combination of the spectrum of $D$ with the SNF of $D^Q$ gives better results than using only the spectrum of the two matrices.
Therefore, this suggests that when distinguishing graphs, we should compute first the SNF of their signless distance Laplacian matrices and then the spectrum of their distance matrices.

\begin{table}[H]
	
	\centering
    \begin{tabular}{|c|cccccc|}
		\hline
        $M \backslash N$ & $A$ & $L$ & $Q$ & $D$ & $D^L$ & $D^Q$\\
        \hline
        $A$   & 2,151,957 & 24,021 & 22,764 & 1,113,103 & 9,253 & 7,688 \\
        $L$   & 521,200 & 1,059,992 & 121,708 & 192,455 & 562,943 & 44,398 \\
        $Q$   & 136,347 & 84,058 & 486,524 & 48,413 & 44,848 & 250,068 \\
        $D$   & 1,073,185 & 15,176 & 13,496 & 1,145,275 & 8,935 & 7,646 \\
        $D^L$ & 300,596 & 563,219 & 52,757 & 110,574 & 611,989 & 47,004 \\
        $D^Q$ & 65,627 & 44,475 & 245,529 & 28,061 & 46,941 & 255,964 \\
        \hline
	\end{tabular}
	
\caption{$|\mathcal{G}^{sp-in}_{10}(M,N)|$}
	\label{Tab:cospectral-invarianttwomatrices}
\end{table}

In order to improve the value obtained for $|\mathcal{G}^{sp}_{10}(D, D^L, D^Q)|$, we explore the use of the following parameters.
Let $|\mathcal{G}^{sp-sp-in}_n(M_1,M_2,M_3)|$ be the number of graphs with a cospectral mate respect to the matrix $M_1$ which is also a cospectral mate with respect $M_2$, and that is also a coinvariant mate with respect to the matrix $M_3$.
Let $|\mathcal{G}^{sp-in-in}_n(M_1,M_2,M_3)|$ be the number of graphs with a cospectral mate respect to the matrix $M_1$ which is also a coinvariant mate with respect $M_2$, and is also a coinvariant mate with respect to the matrix $M_3$.
The results from this analysis are shown in Table~\ref{Tab:cospectral-invarianttwothreematrices}.

\begin{table}[H]
	
	\centering
    \begin{tabular}{lcccccc}
		\hline
        $n$ & 8 & 9 & 10\\
        \hline
        

        
        
        $|\mathcal{G}^{sp-in}_n(A, D^L)|$ & 0 & 32 & 9,253\\
        $|\mathcal{G}^{sp-in}_n(D, D^L)|$ & 0 & 32 & 8,935\\
        $|\mathcal{G}^{sp-in}_n(A, D^Q)|$ & 0 & 2 & 7,688\\
        $|\mathcal{G}^{sp-in}_n(D, D^Q)|$ & 0 & 0 & 7,646\\
        $|\mathcal{G}^{sp-sp-in}_n(A, D, D^L)|$ & 0 & 32 & 8,743\\
        $|\mathcal{G}^{sp-sp-in}_n(A, D, D^Q)|$ & 0 & 0 & 7,550\\
        $|\mathcal{G}^{sp-in-in}_n(A, D^L, D^Q)|$ & 0 & 0 & 7,490\\
        $|\mathcal{G}^{sp-in-in}_n(D, D^L, D^Q)|$ & 0 & 0 & 7,510\\
        \hline
	\end{tabular}

\caption{Number of connected graphs with a cospectral and a coinvariant mate for the signless distance Laplacian and distance Laplacian matrices.}
	\label{Tab:cospectral-invarianttwothreematrices}
\end{table}

From Table~\ref{Tab:cospectral-invarianttwothreematrices}, we can see that  $|\mathcal{G}^{sp-sp-in}_{10}(A, D, D^Q)|$, $|\mathcal{G}^{sp-in-in}_{10}(A, D^L, D^Q)|$ and $|\mathcal{G}^{sp-in-in}_{10}(D, D^L, D^Q)|$ are better than
$|\mathcal{G}^{sp}_{10}(D, D^L, D^Q)|=7,622$. 
Actually, the best performance is obtained with $|\mathcal{G}^{sp-in-in}_{10}(A, D^L, D^Q)|=7,490$.
Note that the order in computing each parameter matters only in the ability of the parameter to distinguish graphs.

To sum up, from the above computational results one can conclude that the best procedure to distinguish graphs using the spectrum and the SNF is first to compute the SNF of their $D^Q$ matrices, since $\IN_n(D^Q)$ has the best performance over the spectral and invariant uncertainty of all matrices. 
Then, if necessary, compute the spectrum of their $A$ matrices, since $|\mathcal{G}_n^{sp-in}(A,D^L)|$ is lower than $|\mathcal{G}_n^{in}(D^L,D^Q)|$ for $n\leq 10$. 
Finally compute the SNF of the $D^L$ matrices.

\subsection{Coinvariant trees}
We end up Section \ref{sec:enumaration} with an observation on coinvariant trees. 
Aouchiche and Hansen \cite{ah2013} reported enumeration results on cospectral trees with at most 20 vertices with respect to $D$, $D^L$ and $D^Q$.
For $D$, they found that among the 123,867 trees on 18 vertices, there are two pairs of $D$-cospectral mates.
Among the 317,955 trees on 19 vertices, there are six pairs of $D$-cospectral mates.
There are 14 pairs of $D$-cospectral mates over all the 823,065 trees on 20 vertices.
And surprisingly, after the enumeration of all 1,346,023 trees on at most 20 vertices, they found no $D^L$-cospectral mates and no $D^Q$-cospectral mates.
This fact lead the authors to conjecture that every tree is determined by its distance Laplacian spectrum, and by its distance signless Laplacian spectrum.

Analogously, for the SNF of $D$, $D^L$ and $D^Q$ of trees one can obtain some similar insights. But for that, first we need to state a result by Hou and Woo \cite{HW2008}, who extended the Graham and Pollak celebrated formula $\det(D(T_{n+1}))=(-1)^nn2^{n-1}$ for any tree $T_{n+1}$ with $n+1$ vertices to the SNF of the distance matrix.

\begin{theorem}\cite[Theorem 3]{HW2008}\label{thm:HouWoo}
Let $T_{n+1}$ be a tree with $n+1$ vertices, then
$\SNF(D(T_{n+1}))=\sf{I}_2\oplus 2\sf{I}_{n-2}\oplus (2n)$.
\end{theorem}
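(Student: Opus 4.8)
The plan is to prove $\SNF(D(T_{n+1})) = \sf{I}_2 \oplus 2\,\sf{I}_{n-2} \oplus (2n)$ by pinning down the invariant factors one at a time, using (i) the structure of the distance matrix of a tree, (ii) the Graham–Pollak determinant $\det(D(T_{n+1})) = (-1)^n n\, 2^{n-1}$, and (iii) a $2$-rank computation over $\mathbb{F}_2$. I would first observe that every tree on at least two vertices has an edge, so $D$ contains a $1$ (a pair of adjacent vertices), whence $f_1 = 1$. To get $f_2 = 1$ as well, note that a tree on at least three vertices has a vertex of degree $\geq 2$, which yields a $2 \times 2$ submatrix of the form $\begin{psmallmatrix} 0 & 1 \\ 1 & 0 \end{psmallmatrix}$ or $\begin{psmallmatrix} 1 & 0 \\ 2 & 1 \end{psmallmatrix}$ (adjacent pairs sharing a common neighbour) with determinant $\pm 1$; since the $\gcd$ of all $2\times 2$ minors divides this, $f_1 f_2 = 1$, so $f_1 = f_2 = 1$.

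Next I would show that $2 \mid f_i$ for all $i \geq 3$; equivalently, that the $\mathbb{F}_2$-rank of $D(T_{n+1})$ is exactly $2$. Over $\mathbb{F}_2$, the entry $D_{uv} = \operatorname{dist}(u,v) \bmod 2$ is the parity of the distance; fixing any root, $\operatorname{dist}(u,v) \equiv \operatorname{dist}(r,u) + \operatorname{dist}(r,v) \pmod 2$ because the tree path from $u$ to $v$ is the symmetric difference of the two root-paths. Hence, writing $p \in \mathbb{F}_2^{n+1}$ for the parity vector $p_v = \operatorname{dist}(r,v) \bmod 2$ and $\mathbf{1}$ for the all-ones vector, we get $D \equiv p\,\mathbf{1}^\top + \mathbf{1}\,p^\top \pmod 2$ (the diagonal is zero, consistent with $2p_v \equiv 0$). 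This is a rank-$\leq 2$ matrix over $\mathbb{F}_2$, and it has rank exactly $2$ as soon as $p$ is not a scalar multiple of $\mathbf{1}$, which holds precisely when the tree has vertices at both even and odd distance from $r$, i.e. whenever $n + 1 \geq 2$. Therefore $\operatorname{rank}_{\mathbb{F}_2} D = 2$, so $2 \mid f_3, \dots, f_{n+1}$.

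Now I would pin down the exact values. We have $f_1 = f_2 = 1$ and $f_i \in \{2, 4, 6, \dots\}$ for $i \geq 3$; since $f_3 \mid f_4 \mid \cdots \mid f_{n+1}$ and each is a multiple of $2$, write $f_i = 2 g_i$ with $g_3 \mid \cdots \mid g_{n+1}$ and $g_i \geq 1$. The Graham–Pollak formula (valid over $\mathbb{Z}$, and available as the classical input I may assume) gives $\prod_{i=1}^{n+1} f_i = |\det D| = n\, 2^{n-1}$, hence $\prod_{i=3}^{n+1} 2 g_i = 2^{n-1} \prod_{i=3}^{n+1} g_i = n\, 2^{n-1}$, so $\prod_{i=3}^{n+1} g_i = n$. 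It remains to show $g_3 = \cdots = g_n = 1$ and $g_{n+1} = n$; equivalently, that $f_i = 2$ for $3 \le i \le n$. For this it suffices to exhibit, for each such $i$, an $i \times i$ submatrix of $D$ whose determinant is exactly $\pm 2$ (then $f_1 \cdots f_i \mid 2^{i-2}\cdot\text{odd}$ forces, together with $2^{i-2} \mid f_1\cdots f_i$, the equality $f_1 \cdots f_i = 2^{i-2}$, i.e. $g_3 = \cdots = g_i = 1$). A clean way is to use a path $P_{i}$ sitting inside $T_{n+1}$ as an induced subgraph — which exists for any $i \le n+1$ since a tree has a path on (diameter $+1$) $\geq 2$ vertices, but to guarantee length $i$ one should instead take a cleverly chosen set of $i$ vertices; the safest choice is to pick a spanning path of the tree when $T$ is itself a path, and otherwise argue via a leaf-deletion/induction on $n$: deleting a leaf $v$ from $T_{n+1}$ and expanding $\det D$ along the corresponding row and column relates the determinant of $D(T_{n+1})$ to that of $D(T_n)$, and one tracks how the invariant factors change. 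The cleanest route, and the one I would write up, is induction on $n$: assume $\SNF(D(T_n)) = \sf{I}_2 \oplus 2\sf{I}_{n-3} \oplus (2(n-1))$, add a pendant vertex, and show the new SNF is as claimed by combining the rank-$2$-mod-$2$ fact, the determinant formula, and a divisibility bookkeeping on the $(n-1)\times(n-1)$ and $n\times n$ minors.

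\textbf{Main obstacle.} The $f_1 = f_2 = 1$ step and the $\mathbb{F}_2$-rank-$2$ step are easy and structural. The real work is showing that there is no ``intermediate'' jump, i.e. that $f_3 = \cdots = f_n = 2$ and all the arithmetic of $n$ is concentrated in the last factor $f_{n+1} = 2n$ rather than being distributed among several factors (e.g. ruling out $\SNF = \sf I_2 \oplus 2\sf I_{n-3}\oplus 2k \oplus 2(n/k)$). Controlling this requires either a sharp lower bound on the $\gcd$ of the $(n-1)\times(n-1)$ minors — showing it is not divisible by $4$ — or a careful inductive argument via leaf deletion that keeps the determinant and the mod-$2$ rank aligned; I expect this divisibility bookkeeping to be the crux of the proof.
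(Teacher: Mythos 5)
First, a remark on the comparison you asked for: the paper does not prove this statement at all --- Theorem~\ref{thm:HouWoo} is imported verbatim from Hou and Woo \cite{HW2008} --- so your argument has to stand on its own. Your first three steps are correct and cleanly argued: the principal $2\times 2$ submatrix on an edge has determinant $-1$, so $f_1=f_2=1$; the identity $d(u,v)\equiv d(r,u)+d(r,v)\pmod 2$ gives $D\equiv p\mathbf{1}^\top+\mathbf{1}p^\top$ over $\mathbb{F}_2$, a matrix of rank exactly $2$, so $2\mid f_i$ for $i\ge 3$; and Graham--Pollak pins the product $\prod_{i\ge 3}f_i=n2^{n-1}$. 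The genuine gap is precisely the step you flag as the crux: nothing in the write-up rules out the odd part of $n$ being distributed over several of $f_3,\dots,f_n$. Moreover, the first route you propose cannot work: every nonzero $i\times i$ minor is a multiple of $\Delta_i(D)\ge 2^{i-2}$, so no $i\times i$ submatrix has determinant $\pm 2$ once $i\ge 4$; and the path idea fails both because a tree need not contain a path on $i$ vertices (a star does not for $i\ge 4$) and because $\det D(P_i)=\pm(i-1)2^{i-2}$ only yields $\Delta_i\mid (i-1)2^{i-2}$, which leaves the odd part undetermined.

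The leaf-deletion induction you retreat to does close the gap, but you must actually run the bookkeeping; it is short, so here is what is missing. Deleting a leaf $v$ preserves all remaining distances, so $D(T_n):=D(T_{n+1}-v)$ is a principal submatrix of $D(T_{n+1})$. By the induction hypothesis $\Delta_{n-1}(D(T_n))=2^{n-3}$, hence $\Delta_{n-1}(D(T_{n+1}))\mid 2^{n-3}$; combined with the lower bound $2^{n-3}\mid\Delta_{n-1}(D(T_{n+1}))$ from the parity step, this forces $f_3=\cdots=f_{n-1}=2$. Next, $\Delta_n(D(T_{n+1}))$ divides the $n\times n$ minor $\det D(T_n)=\pm(n-1)2^{n-2}$, so writing $f_n=2g$ one gets $g\mid n-1$; on the other hand $f_nf_{n+1}=\Delta_{n+1}/\Delta_{n-1}=4n$ together with $f_n\mid f_{n+1}$ gives $g^2\mid n$, whence $g\mid\gcd(n-1,n)=1$, and therefore $f_n=2$ and $f_{n+1}=2n$. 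With this paragraph added (and the base case $n=2$ checked directly), your strategy becomes a complete proof; as written, the proposal identifies the right ingredients but stops short of the decisive divisibility argument.
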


The following is a straight forward consequence from Theorem \ref{thm:HouWoo}, since this implies that all trees on $n$ vertices have the same SNF of its distance matrix $D$.

\begin{corollary}
 All trees with $n$ vertices are $D$-coinvariant mates.
\end{corollary}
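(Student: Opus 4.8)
The plan is to apply Theorem~\ref{thm:HouWoo} directly. By that theorem, for every tree $T$ on $n$ vertices (writing $n = (n-1)+1$, so $T = T_n$ in the notation of the theorem with $n-1$ playing the role of the ``$n$'' there), the Smith normal form of its distance matrix is $\SNF(D(T)) = \sf{I}_2 \oplus 2\sf{I}_{n-3} \oplus (2(n-1))$. Crucially, the right-hand side depends only on the number of vertices $n$ and not on the structure of the tree $T$ itself.

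Hence I would argue as follows. Let $T$ and $T'$ be any two trees, each with $n$ vertices. Applying Theorem~\ref{thm:HouWoo} to each, we obtain
$$\SNF(D(T)) = \sf{I}_2 \oplus 2\sf{I}_{n-3} \oplus (2(n-1)) = \SNF(D(T')).$$
Since being $D$-coinvariant means precisely that the Smith normal forms of the distance matrices (computed over $\mathbb{Z}$) coincide, $T$ and $T'$ are $D$-coinvariant mates. As $T, T'$ were arbitrary trees on $n$ vertices, all trees with $n$ vertices are pairwise $D$-coinvariant, which is the claim.

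There is no real obstacle here: the only thing to be careful about is the bookkeeping of the index shift, since Theorem~\ref{thm:HouWoo} is stated for trees on $n+1$ vertices, whereas the corollary is phrased for trees on $n$ vertices; one simply substitutes $n \mapsto n-1$. (For small $n$, say $n \le 2$, the statement is vacuous or trivial, so one may tacitly assume $n \ge 2$.) The substantive content is entirely carried by Theorem~\ref{thm:HouWoo}; the corollary is just the observation that a graph invariant which takes a single value on an entire class of graphs cannot distinguish any two members of that class.
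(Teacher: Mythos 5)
Your proof is correct and matches the paper's argument exactly: the paper likewise observes that Theorem~\ref{thm:HouWoo} forces every tree on $n$ vertices to have the same $\SNF$ of its distance matrix, so any two such trees are $D$-coinvariant. Your index bookkeeping ($n\mapsto n-1$) is accurate.
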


After enumerating coinvariant trees with at most 20 vertices with respect to $D^L$ and $D^Q$, we found no $D^L$-coinvariant mates and no $D^Q$-coinvariant mates among all trees with up to 20 vertices.
This fact lead us to conjecture that almost all trees are determined by the SNF of its distance Laplacian, and analogously, by the SNF of its distance signless Laplacian.


\section{Graphs determined by the SNF}\label{sec:graphsdeterminedbytheirdeterminantalideals}

Not much is known regarding graph characterizations using the SNF. A few examples of graphs characterized by the SNF of the adjacency matrix and the Laplacian matrix appear in \cite{alfaval0} or \cite[Chapter~13.8]{bh}. However, our computational results from Section \ref{sec:enumaration} provide an indication that possibly almost no graph has a coinvariant mate when $n\rightarrow \infty$ for the distance Laplacian matrix $D^L$ and the signless distance Laplacian matrix $D^Q$. While the SNF of $D^L$ has been recently used to characterize complete graphs and star graphs  \cite{akm}, to our knowledge there is not yet any graph characterization result using the SNF of $D
^Q$. In this section we present a new method to show that complete graphs can be determined by considering the SNF of $D^Q$.

As mentioned before, it is known that complete graphs and star graphs are determined by the SNF of the distance Laplacian matrix \cite{akm}:

\begin{theorem}\cite{akm}\label{teo:completegraphsaredeterminedbyDL}
Complete graphs are determined by the SNF of the distance Laplacian matrix.
\end{theorem}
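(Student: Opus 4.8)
The plan is to pin down $\SNF(D^L(K_n))$ exactly and then to show, via a Loewner‑order (positive‑semidefiniteness) argument, that $K_n$ is the only connected graph on $n$ vertices whose distance Laplacian has that Smith normal form. First I would compute the target. Since $D(K_n)=J_n-I_n$ and every transmission of $K_n$ equals $n-1$, we have $D^L(K_n)=(n-1)I_n-(J_n-I_n)=nI_n-J_n=L(K_n)$. Reducing $nI_n-J_n$ over $\mathbb{Z}$ (subtract one row from the other $n-1$ rows, then clear columns), or quoting the classical fact that the critical group of $K_n$ is $(\mathbb{Z}/n\mathbb{Z})^{n-2}$, gives
\[
\SNF\big(D^L(K_n)\big)=I_1\oplus nI_{n-2}\oplus[0];
\]
in particular its $(n-1)$‑st determinantal ideal is $(n^{n-2})$ (matching the Matrix--Tree value $\tau(K_n)=n^{n-2}$) and its $n$‑th is $(0)$.

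Next I would record three facts about $D^L$. Write $\operatorname{Tr}(G)$ for the diagonal matrix of transmissions, so $D^L(G)=\operatorname{Tr}(G)-D(G)$. (a) $D^L(G)=L(G)+L(H)$, where $H$ is the weighted graph on $V(G)$ whose edges are the non‑edges $ij$ of $G$, weighted by $d_G(i,j)$; hence $D^L(G)$ is positive semidefinite with kernel exactly $\langle\mathbf 1\rangle$, so $\rank D^L(G)=n-1$ and every principal $(n-1)\times(n-1)$ submatrix of $D^L(G)$ is positive definite. (b) From $D^L(G)\,\operatorname{adj}(D^L(G))=\det(D^L(G))I_n=0$, the columns of $\operatorname{adj}(D^L(G))$ lie in $\langle\mathbf 1\rangle$, and symmetry forces $\operatorname{adj}(D^L(G))=c(G)\,\mathbf 1\mathbf 1\trans$; thus every $(n-1)\times(n-1)$ minor of $D^L(G)$ equals $\pm c(G)$, where $c(G)=\det\!\big(D^L(G)[\widehat{n}]\big)>0$ is the principal submatrix with row and column $n$ deleted, so $\Delta_{n-1}(D^L(G))=(c(G))$. (c) The second smallest eigenvalue of $D^L(G)$ is at least $n$: this is equivalent to $D^L(G)+J_n\succeq nI_n$ (note $(D^L(G)+J_n)\mathbf 1=n\mathbf 1$), i.e.\ to $D(G)+nI_n-J_n-\operatorname{Tr}(G)\preceq 0$; but this last matrix has all row sums $0$ (because $\operatorname{Tr}_i=\sum_{j\neq i}d_G(i,j)$) and nonnegative off‑diagonal entries $d_G(i,j)-1$, so it is $-L(H')$ for a weighted graph $H'$ and hence negative semidefinite. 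Consequently $D^L(G)\succeq nI_n-J_n=D^L(K_n)$.

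Now the rigidity step. Suppose $G\in\mathcal{G}_n$ satisfies $\SNF(D^L(G))=\SNF(D^L(K_n))$. Comparing $(n-1)$‑st determinantal ideals and using (b) gives $c(G)=n^{n-2}$. By (c), $D^L(G)[\widehat{n}]\succeq D^L(K_n)[\widehat{n}]$, both are positive definite, and $\det D^L(G)[\widehat{n}]=n^{n-2}=\det D^L(K_n)[\widehat{n}]$; since $X\succeq Y\succ 0$ with $\det X=\det Y$ forces $X=Y$, we get $D^L(G)[\widehat{n}]=D^L(K_n)[\widehat{n}]$. As $D^L(G)$ and $D^L(K_n)$ are symmetric with all line sums zero, agreement on the leading $(n-1)\times(n-1)$ block forces agreement on the whole matrix, so $D^L(G)=nI_n-J_n$; reading off off‑diagonal entries, $d_G(i,j)=1$ for all $i\neq j$, i.e.\ $G=K_n$.

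The proof is essentially bookkeeping with determinantal ideals and the Loewner order, so there is no single deep obstacle; the one genuinely substantive input is the eigenvalue bound in (c), whose whole content is the observation that $D(G)+nI_n-J_n-\operatorname{Tr}(G)$ is minus a weighted graph Laplacian. The point to handle with care is (b): identifying $\Delta_{n-1}(D^L(G))$ with the single number $c(G)$ relies essentially on $G$ being connected, as that is what makes $\ker D^L(G)$ one‑dimensional and $c(G)\neq 0$. If one prefers to avoid the explicit eigenvalue lemma, the same conclusion follows from any proof of $D^L(G)\succeq D^L(K_n)$ for connected $G$, e.g.\ via the decomposition $D^L(G)=(nI_n-J_n)-\big(D(G)+nI_n-J_n-\operatorname{Tr}(G)\big)$ used above.
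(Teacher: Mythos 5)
Your proof is correct, but it takes a genuinely different route from the one in (and around) the paper. The paper does not actually prove this statement itself --- it imports it from \cite{akm} --- and the argument it does give for the analogous $D^Q$ statement (which mirrors the $D^L$ proof in \cite{akm}) runs through distance ideals: a connected graph whose matrix has at most one invariant factor equal to $1$ has at most one trivial distance ideal, hence is complete or complete bipartite by Theorem~\ref{teo:clasificationofgraphswith1trivialdistance}, and the complete bipartite case is then excluded by an explicit computation of the second invariant factor. You replace that machinery by a Loewner-order rigidity argument: the decomposition $D^L(G)-D^L(K_n)=L(H')$ with $H'$ the nonnegatively weighted graph with weights $d_G(i,j)-1$ (equivalently, the Aouchiche--Hansen bound that the second-smallest distance Laplacian eigenvalue is at least $n$), the identification via the adjugate of $\Delta_{n-1}(D^L(G))$ with the positive quantity $c(G)=\det D^L(G)[\widehat n\,]$, and the fact that $X\succeq Y\succ 0$ with $\det X=\det Y$ forces $X=Y$. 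All the steps check out, and the two points that genuinely need connectivity --- $\ker D^L(G)=\langle\mathbf 1\rangle$ and $c(G)>0$ --- are exactly the ones you flag. As for what each approach buys: the distance-ideal proof is one instance of a general framework that also yields the star-graph result (Theorem~\ref{teo:stargraphsaredeterminedbyDL}) and the $D^Q$ theorem of Section~\ref{sec:graphsdeterminedbytheirdeterminantalideals}, whereas your argument is elementary and self-contained (Matrix--Tree for $K_n$ plus basic positive semidefiniteness) and in fact proves something stronger: among connected graphs on $n$ vertices, $K_n$ is the unique minimizer of $\Delta_{n-1}(D^L)$, so it is already determined by that single determinantal divisor rather than by the full Smith normal form. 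On the other hand, your method leans heavily on the zero-line-sum structure of $D^L$ and would not transfer directly to $D^Q$, where the ideal-theoretic route still applies.
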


\begin{theorem}\cite{akm}\label{teo:stargraphsaredeterminedbyDL}
Star graphs are determined by the SNF of the distance Laplacian matrix.
\end{theorem}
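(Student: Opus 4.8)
The plan is to first determine $\SNF(D^L(K_{1,n-1}))$ exactly, and then to show that this normal form is realized by no other connected graph on $n$ vertices. Writing $K_{1,n-1}$ with center $0$ and leaves $1,\dots,n-1$, we have in block form
\[
D^L(K_{1,n-1})=\begin{pmatrix} n-1 & -\mathbf 1^{\top}\\ -\mathbf 1 & (2n-1)I_{n-1}-2J_{n-1}\end{pmatrix},
\]
and I would compute its Smith normal form by a short integer elimination: use a center--leaf entry $-1$ as a first pivot to split off a $1\times1$ unit block, then diagonalize the surviving $(n-1)\times(n-1)$ block. I expect the outcome
\[
\SNF(D^L(K_{1,n-1}))=\diag(1,\underbrace{2n-1,\dots,2n-1}_{n-2},0).
\]
Equivalently, and this is the form I would actually carry into the converse, the determinantal divisors are $\Delta_1=1$, $\Delta_k=(2n-1)^{k-1}$ for $2\le k\le n-1$, and $\Delta_n=0$. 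These follow from three observations: $D^L$ has a $-1$ entry, so $\Delta_1=1$; one checks $D^L(K_{1,n-1})\equiv(n-1)\,\mathbf v\mathbf v^{\top}\pmod{2n-1}$ with $\mathbf v=(1,2,\dots,2)^{\top}$, so $D^L(K_{1,n-1})$ has rank $\le1$ modulo $2n-1$ while a suitable $2\times2$ minor equals exactly $2n-1$, giving $\Delta_2=2n-1$; and the common $(n-1)\times(n-1)$ cofactor is $\det\!\big((2n-1)I_{n-1}-2J_{n-1}\big)=(2n-1)^{n-2}$, which together with $\Delta_2=2n-1$ and the divisibility chain $f_2\mid\cdots\mid f_{n-1}$ forces $f_2=\cdots=f_{n-1}=2n-1$.

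Now let $H\in\mathcal G_n$ be connected with $\SNF(D^L(H))=\SNF(D^L(K_{1,n-1}))$; then $\operatorname{rank}D^L(H)=n-1$, the product of the nonzero invariant factors of $D^L(H)$ is $(2n-1)^{n-2}$, and the second determinantal divisor of $D^L(H)$ is $2n-1$, so (the determinantal divisors forming a divisibility chain) \emph{every} $j\times j$ minor of $D^L(H)$ with $j\ge2$ is divisible by $2n-1$. Writing $\operatorname{Tr}_i$ and $d(i,j)$ for transmissions and distances, this gives $(2n-1)\mid\operatorname{Tr}_i\operatorname{Tr}_j-d(i,j)^2$ for all $i\ne j$; in particular $\operatorname{Tr}_i\operatorname{Tr}_j\equiv1\pmod{2n-1}$ when $i\sim j$ (so every $\operatorname{Tr}_i$ is a unit mod $2n-1$), $\operatorname{Tr}_i\operatorname{Tr}_j\equiv4$ when $d(i,j)=2$, and every principal $3\times3$ minor is divisible by $2n-1$. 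From here I would argue in two cases. If $H$ contains a triangle, a short propagation---a vertex adjacent to a clique of size $\ge2$ must be adjacent to all of it, since otherwise some neighborhood contains both an adjacent pair and a pair at distance $2$, forcing $1\equiv4\pmod{2n-1}$---together with connectivity gives $H=K_n$; but the product of nonzero invariant factors of $D^L(K_n)=nI-J$ is $n^{n-2}\ne(2n-1)^{n-2}$ for $n\ge3$, a contradiction, so $H$ is triangle-free. If instead $H$ has an edge $jj'$ with $\deg j,\deg j'\ge2$, pick $k\in N(j)\setminus\{j'\}$ and $l\in N(j')\setminus\{j\}$; triangle-freeness gives $d(k,j')=d(l,j)=2$, and expanding the principal $3\times3$ minor on $\{j,j',k\}$ (which is divisible by $2n-1$) and substituting $\operatorname{Tr}_j\operatorname{Tr}_{j'}\equiv\operatorname{Tr}_j\operatorname{Tr}_k\equiv1$, $\operatorname{Tr}_{j'}\operatorname{Tr}_k\equiv4$ yields $8\operatorname{Tr}_j+4\equiv0\pmod{2n-1}$, hence $\operatorname{Tr}_j\equiv-n\pmod{2n-1}$, and symmetrically $\operatorname{Tr}_{j'}\equiv-n$. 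Then $\operatorname{Tr}_j\operatorname{Tr}_{j'}\equiv n^2$, contradicting $\operatorname{Tr}_j\operatorname{Tr}_{j'}\equiv1$ since $4(n^2-1)\equiv-3\pmod{2n-1}$ and $\gcd(2n-1,4)=1$ force $(2n-1)\mid3$. Thus every edge of $H$ has a degree-$1$ endpoint, so $H$ is a disjoint union of stars and, being connected, $H=K_{1,n-1}$. The cases $n\le3$ are trivial or immediate by inspection.

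I expect the main obstacle to be the modular bookkeeping in the second case: the consequences extracted modulo $2n-1$ must be sharp enough to conclude even when $2n-1$ is composite, and it is precisely here that information beyond $\Delta_1$ (namely the full value $\Delta_2=2n-1$, which propagates divisibility by $2n-1$ to the $3\times3$ minors) is essential---using only the $2\times2$ principal minors one obtains $(2\operatorname{Tr}_j)^2\equiv1\pmod{2n-1}$, which admits spurious roots once $2n-1$ has several prime factors (the first such $n$ is $8$), whereas the $3\times3$ minor pins $\operatorname{Tr}_j$ down to $-n$. A secondary point requiring care is the ``triangle $\Rightarrow K_n$'' propagation---verifying that the transmission congruences really force a vertex adjacent to a clique to be adjacent to all of it, and then using connectivity to reach all of $V(H)$---together with the bookkeeping for the small cases $n\le3$.
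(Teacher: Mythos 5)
The paper itself contains no proof of this statement: it is quoted from \cite{akm}, and the only related machinery developed here is the distance--ideal route used in Section~3 for the analogous $D^Q$/complete-graph result (count trivial distance ideals, invoke the classification of graphs with exactly one trivial distance ideal as complete or complete bipartite, then separate the surviving candidates by their invariant factors). Your argument is a genuinely different, self-contained one. The computation $\SNF(D^L(K_{1,n-1}))=\diag(1,2n-1,\dots,2n-1,0)$ is correct (I checked $\Delta_1=1$, $\Delta_2=2n-1$ via the rank-one reduction mod $2n-1$ together with the explicit minor $-(2n-1)$, and the cofactor $(2n-1)^{n-2}$), and the converse uses only the consequence that $\Delta_2=2n-1$ divides every $j\times j$ minor for $j\ge2$: the resulting congruences $\operatorname{Tr}_i\operatorname{Tr}_j\equiv1$ for adjacent pairs, $\operatorname{Tr}_i\operatorname{Tr}_j\equiv4$ for pairs at distance $2$, the deduction $\operatorname{Tr}_j\equiv-n\pmod{2n-1}$ from the principal $3\times3$ minor on $\{j,j',k\}$, and the final contradiction $(2n-1)\mid3$ all check out. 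What your route buys is elementarity and independence from the classification of graphs with few trivial distance ideals; what the ideal-theoretic route buys is that one classification serves several such characterizations at once.

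One spot needs repair. In the triangle case you assert that a vertex adjacent to a clique of size $\ge2$ must be adjacent to all of it; taken literally for a clique of size $2$ this is false (it would force every connected graph to be complete), and the one-line justification does not by itself produce $1\equiv4$. The lemma holds only for cliques of size $\ge3$: for a triangle $\{u,v,w\}$ the relations $\operatorname{Tr}_u\operatorname{Tr}_v\equiv\operatorname{Tr}_u\operatorname{Tr}_w\equiv\operatorname{Tr}_v\operatorname{Tr}_w\equiv1$ first give $\operatorname{Tr}_u\equiv\operatorname{Tr}_v\equiv\operatorname{Tr}_w$ and $\operatorname{Tr}_u^2\equiv1\pmod{2n-1}$, and only then does a vertex $x$ with $x\sim u$ and $x\not\sim v$ yield $4\equiv\operatorname{Tr}_x\operatorname{Tr}_v\equiv\operatorname{Tr}_u^{-1}\operatorname{Tr}_u\equiv1$, i.e.\ $(2n-1)\mid3$. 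Since your Case~1 assumes a triangle is present, the propagation survives once restated this way: take a maximal clique containing the triangle, show it absorbs every neighbour, conclude $H=K_n$ by connectivity, and rule out $K_n$ by $\Delta_2(D^L(K_n))=n\neq 2n-1$. With that correction, and the explicit inspection of $n\le3$, the proof is complete.
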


In this section we will show an analogous result to Theorem \ref{teo:completegraphsaredeterminedbyDL}, but instead we will use the SNF of the signless distance Laplacian matrix. In order to do so  we need to define the distance ideals of a graph, which were first introduced in \cite{at}.

Given a connected graph $G=(V,E)$ and a set of indeterminates $X_G=\{x_u \, : \, u\in V(G)\}$, let $\diag(X_G)$ denote the diagonal matrix with the indeterminates in the diagonal and zeroes elsewhere.
The {\it generalized distance matrix} $D(G,X_G)$ of $G$ is the matrix with rows and columns indexed by the vertices of $G$ defined as $\diag(X_G)+D(G)$.
Note that one can recover the distance matrix from the generalized distance matrix by just evaluating $X_G$ at the zero vector, that is, $D(G)=D(G,\bf{0})$.
One can also recover the distance Laplacian and signless distance Laplacian matrices as follows.
Let $tr(u)$ denote the {\it transmission} of a vertex $u$, which is defined as $\sum_{v\in V}d_G(u,v)$, and let $tr(G)$ be the vector whose entries are associated with the transmission of the vertices of $G$.
Then, $D^L(G)=-D(G,-tr(G))$ and $D^Q=D(G,tr(G))$.

Let $\mathbb{Z}[X_G]$ be the polynomial ring over in the variables $X_G$.
For each $k\in[n]:=\{1,..., n\}$, the $k${\it-th distance ideal} $I_k(G,X_G)$ of $G$ is the determinantal ideal given by
\[
\langle {\rm minors}_k(D(G,X_G))\rangle\subseteq \mathbb{Z}[X_G],
\]
where $n$ is the number of vertices of $G$ and ${\rm minors}_i(D(G,X_G))$ is the set of the determinants of the $k\times k$ submatrices of $D(G,X_G)$.

An alternative way of computing the SNF of a matrix $M$ is as follows.
Let $\Delta_k(M)$ denote the {\it greatest common divisor} of the $k$-minors of the matrix $M$.  Then the $k$-{\it th} invariant factor $f_k$ is equal to $\Delta_k(M)/ \Delta_{k-1}(M)$, where $\Delta_0(M)=1$.
Therefore, evaluating the $k${\it -th} distance ideal  $I_k(G,X_G)$ at $X_G={\bf 0}$, we obtain the ideal in $\mathbb{Z}$ generated by $\Delta_k(M)$, that is,  $\langle\Delta_k(D(G))\rangle\subseteq \mathbb{Z}$, from which the invariant factors of $D(G)$ can be recovered \cite{at}.

The following result shows a relation between the SNF of the distance matrix and the distance ideals.

\begin{proposition}\cite{at}\label{teo:eval1}
Let ${\bf d}\in \mathbb{Z}^{V(G)}$.
If $f_1\mid\cdots\mid f_{r}$ are the non-zero invariant factors of the SNF of the matrix $D(G,{\bf d})$, then
\[
I_k(G,{\bf d})=\left\langle \prod_{j=1}^{k} f_j \right\rangle\text{ for all }1\leq k\leq r.
\]
\end{proposition}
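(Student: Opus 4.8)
The plan is to reduce the statement to the classical description of invariant factors via gcd's of minors that is recalled just above, namely $f_k=\Delta_k(M)/\Delta_{k-1}(M)$ with $\Delta_0(M)=1$, applied to the integer matrix $M=D(G,{\bf d})$. The only point that differs from the case ${\bf d}={\bf 0}$ already discussed is that we now evaluate the generalized distance matrix at an arbitrary integer vector. Since evaluation $X_G\mapsto{\bf d}$ is a ring homomorphism $\mathbb{Z}[X_G]\to\mathbb{Z}$ and the determinant of a $k\times k$ submatrix is a polynomial in its entries, evaluating a polynomial $k$-minor of $D(G,X_G)$ at ${\bf d}$ produces exactly the corresponding $k$-minor of the integer matrix $D(G,{\bf d})$; hence $\minors_k(D(G,{\bf d}))$ is precisely the set of evaluations of the generators of $I_k(G,X_G)$, and therefore $I_k(G,{\bf d})$ equals the ideal of $\mathbb{Z}$ generated by the $k\times k$ minors of $D(G,{\bf d})$.

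First I would use that $\mathbb{Z}$ is a principal ideal domain: the ideal generated by a finite family of integers is generated by their greatest common divisor, so $I_k(G,{\bf d})=\langle\Delta_k(D(G,{\bf d}))\rangle$. Next, because $r=\rank D(G,{\bf d})$, for each $k$ with $1\le k\le r$ the matrix has a nonsingular $k\times k$ submatrix, so $\Delta_k(D(G,{\bf d}))\ne 0$ and the quotients $\Delta_j/\Delta_{j-1}$ make sense for $1\le j\le r$. Telescoping $f_j=\Delta_j(D(G,{\bf d}))/\Delta_{j-1}(D(G,{\bf d}))$ from $j=1$ to $j=k$ together with $\Delta_0=1$ gives $\Delta_k(D(G,{\bf d}))=\prod_{j=1}^{k}f_j$ up to a unit of $\mathbb{Z}$, which does not change the generated ideal. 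Combining this with the previous display yields $I_k(G,{\bf d})=\left\langle\prod_{j=1}^{k}f_j\right\rangle$ for all $1\le k\le r$.

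If one wants the argument to be self-contained, the identity $\Delta_k=\prod_{j=1}^k f_j$ can be proved directly: writing $D(G,{\bf d})=U\,\SNF(D(G,{\bf d}))\,V$ with $U,V$ unimodular, the Cauchy--Binet formula expresses every $k\times k$ minor of one side as a $\mathbb{Z}$-linear combination of $k\times k$ minors of the other, so $\Delta_k$ is an invariant of unimodular equivalence; and for the diagonal matrix $\diag(f_1,\dots,f_r,0,\dots,0)$ with $f_1\mid\cdots\mid f_r$ the $k\times k$ minors are the products $f_{i_1}\cdots f_{i_k}$ over $\{i_1<\cdots<i_k\}\subseteq[r]$, all divisible by $f_1\cdots f_k$, which itself occurs. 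The only real work is the bookkeeping around the evaluation map and the rank hypothesis $k\le r$; there is no genuine obstacle beyond standard Smith normal form theory, and indeed the proposition is just the ${\bf d}={\bf 0}$ remark recalled above, now applied verbatim to the integer matrix $D(G,{\bf d})$.
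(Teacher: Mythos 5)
Your argument is correct and is essentially the same one the paper relies on: the result is quoted from the distance-ideals paper, and the text immediately preceding the proposition already sketches exactly this reduction, namely that evaluating $I_k(G,X_G)$ at an integer vector yields the ideal generated by $\Delta_k$ of the evaluated matrix, which equals $\prod_{j=1}^{k}f_j$ by the standard gcd-of-minors description of the Smith normal form. Your added care about the evaluation homomorphism, the PID step, and the nonvanishing of $\Delta_k$ for $k\le r$ fills in the routine details but does not change the route.
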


In this way, distance ideals can be regarded as a generalization of the SNF of the distance, distance Laplacian and signless distance Laplacian matrices.
Thus, to recover the SNF of $D(G)$, $D^L(G)$ and $D^Q(G)$ from the distance ideals, one just needs to evaluate  $X_G$ at ${\bf 0}$, $-tr(G)$ and $tr(G)$, respectively.

In order to show our main result of this section we need the following theorem, which provides the distance ideals of complete graphs.

\begin{theorem}\cite[Theorem 10]{at}\label{distanceidealscompletegraphs}
    The $k$-th distance ideal of the complete graph $K_n$ with $n$ vertices is generated by
     \[
    \begin{cases}
        \prod_{j=1}^n(x_j-1) + \sum_{i=1}^n\prod_{j\neq i}(x_j-1) & \text{if } k = n,\\
         \left\{\prod_{j\in \mathcal{I}}(x_j-1) : \mathcal{I} \subset [n] \text{ and } |\mathcal{I}|=i-1\right\} & \text{if } k < n.
    \end{cases}
    \]
\end{theorem}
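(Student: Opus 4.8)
The plan is to compute \emph{all} $k\times k$ minors of the generalized distance matrix $D(K_n,X_G)$ by an explicit case analysis, and then read off the ideal they generate. Since the distance matrix of $K_n$ is $D(K_n)=J-I$ (the all-ones matrix minus the identity), we have $D(K_n,X_G)=\diag(x_1,\dots,x_n)+J-I$; setting $y_j:=x_j-1$ this becomes $M:=\diag(y_1,\dots,y_n)+J$, so $M_{ii}=y_i+1$ and $M_{ij}=1$ for $i\neq j$. A $k\times k$ submatrix of $M$ is given by a row set $R$ and a column set $C$ with $|R|=|C|=k$, and I would split the argument according to whether $R=C$.

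First, the \emph{mixed} minors, $R\neq C$. For any $i\in R\setminus C$ the $i$-th row of the submatrix is all-ones (for $j\in C$ we have $i\neq j$, hence $M_{ij}=1$), and likewise every column indexed by $C\setminus R$ is all-ones. Since $|R\setminus C|=|C\setminus R|=k-|R\cap C|$, if $|R\cap C|<k-1$ the submatrix has two equal (all-ones) rows and the minor vanishes. If $|R\cap C|=k-1$, write $R=S\cup\{a\}$, $C=S\cup\{b\}$ with $|S|=k-1$ and $a\neq b$; ordering rows as $(S,a)$ and columns as $(S,b)$ and subtracting the all-ones row $a$ from each row indexed by $S$ turns the submatrix into block lower-triangular form with diagonal block $\diag(y_i:i\in S)$, zero upper-right block and bottom-right entry $1$, so the minor equals $\pm\prod_{i\in S}y_i=\pm\prod_{j\in S}(x_j-1)$. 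Second, the \emph{principal} minors, $R=C$: the submatrix is $\diag(y_j:j\in R)$ plus the $k\times k$ all-ones matrix, and the matrix determinant lemma (as a polynomial identity in the $y_j$) gives determinant $\prod_{j\in R}y_j+\sum_{i\in R}\prod_{j\in R\setminus\{i\}}y_j$.

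With these two computations the conclusion is immediate. If $k=n$, the only choice is $R=C=[n]$, so $I_n(K_n,X_G)$ is generated by the single polynomial $\prod_{j=1}^n(x_j-1)+\sum_{i=1}^n\prod_{j\neq i}(x_j-1)$, which is exactly the stated generator. If $k<n$, then for every $(k-1)$-subset $\mathcal I\subset[n]$ there exist distinct $a,b\in[n]\setminus\mathcal I$ (because $n-(k-1)\geq2$), and the corresponding mixed minor is $\pm\prod_{j\in\mathcal I}(x_j-1)$; thus all products $\prod_{j\in\mathcal I}(x_j-1)$ with $|\mathcal I|=k-1$ lie in $I_k(K_n,X_G)$. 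Conversely every minor lies in the ideal they generate: mixed minors are either $0$ or one of these products, while a principal minor $\prod_{j\in R}y_j+\sum_{i\in R}\prod_{j\in R\setminus\{i\}}y_j$ is a $\mathbb Z[X_G]$-combination of them, since each $\prod_{j\in R\setminus\{i\}}y_j$ is such a product and $\prod_{j\in R}y_j=y_i\prod_{j\in R\setminus\{i\}}y_j$ for any fixed $i\in R$. Hence $I_k(K_n,X_G)=\big\langle\,\prod_{j\in\mathcal I}(x_j-1):\mathcal I\subset[n],\ |\mathcal I|=k-1\,\big\rangle$ for $k<n$, as claimed.

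The only genuine work is the case analysis on $(R,C)$ and, inside it, the row reduction that evaluates the mixed minors; once that is in place, the passage from ``list of minors'' to ``generators of the ideal'' is purely formal. I expect the part most prone to slips to be the bookkeeping of permutation signs and of the degenerate cases ($k=1$, or $|R\cap C|$ small), but none of it is deep, and all signs are irrelevant for the ideal.
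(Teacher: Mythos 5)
Your proof is correct and complete. Note, however, that the paper you are working from does not prove this statement at all: it is quoted verbatim as Theorem~10 of the cited reference on distance ideals, so there is no in-paper argument to compare against. Your case analysis on the row set $R$ and column set $C$ of a $k\times k$ submatrix of $\diag(y_1,\dots,y_n)+J$ (with $y_j=x_j-1$) is the natural direct computation: mixed minors with $|R\cap C|<k-1$ vanish because of repeated all-ones rows, mixed minors with $|R\cap C|=k-1$ reduce to $\pm\prod_{j\in S}y_j$, and principal minors are handled by the matrix determinant lemma; the two inclusions between the set of minors and the claimed generating set then follow exactly as you say. Two small remarks. First, the statement as printed in the paper contains a typo: the condition $|\mathcal{I}|=i-1$ in the $k<n$ case should read $|\mathcal{I}|=k-1$, which is what your proof establishes and what the paper's later applications (e.g.\ the computation of $I_2(K_{m,n},\cdot)$ and the SNF evaluations) implicitly use. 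Second, your observation that for $k<n$ one needs $n-(k-1)\geq 2$ to realize every $(k-1)$-subset as an $R\cap C$ with $R\neq C$ is exactly the point at which the $k=n$ case genuinely differs from $k<n$, and you handle it correctly; the degenerate case $k=1$ (empty product, unit ideal) also comes out right.
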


From Theorem \ref{distanceidealscompletegraphs} we can recover the SNFs of the distance, distance Laplacian and signless distance Laplacian matrices of the complete graph. By evaluating the distance ideals at $x_v=0$ for each $v\in V$, we have that $\Delta_i(D(K_n))=1$, for $i\in [n-1]$, and $\Delta_n(D(K_n))=\left|(-1)^n+n(-1)^{n-1}\right|=n-1$.
From which follows that the SNF of the distance matrix of the complete graph with $n$ vertices is ${\sf I}_{n-1}\oplus (n-1)$.
By evaluating the distance ideals at $x_v=-n+1$ for each $v\in V$, we obtain the SNF of distance Laplacian matrix of the complete graph  with $n$ vertices is $1\oplus n{\sf I}_{n-2}\oplus 0$.
And by evaluating the distance ideals at $x_v=n-1$ for each $v\in V$, we obtain the SNF of distance Laplacian matrix of the complete graph  with $n$ vertices is $1\oplus (n-2){\sf I}_{n-2}\oplus 2(n-1)(n-2)$.

An ideal is said to be {\it unit} or {\it trivial} if it is equal to $\langle1\rangle$. Note that every graph with at least one non-loop edge has at least one trivial distance ideal.
Also if the $k$-th invariant factor of $D(G,{\bf d})$ is not equal to $1$, then the ideal $I_k(D, X_D)$ is not trivial.
Therefore, for any graph $G$ and any ${\bf d}\in \mathbb{Z}^{V(G)}$, the number of invariant factors equal to 1 of the matrix $D(G,{\bf d})$ is greater or equal than the number of trivial distance ideals of $G$.
In particular, for any positive integer $k$, the family of graphs with at most $k$ trivial distance ideals contains the family of graphs with at most $k$ invariant factors of $D(G,{\bf d})$ equal to 1.

The last ingredient that we need is the following result. which  characterizes graphs with at most one trivial distance ideal.

\begin{theorem}\cite{at}\label{teo:clasificationofgraphswith1trivialdistance}
A connected graph has only one trivial distance ideal over $\mathbb{Z}[X]$ if and only if $G$ is either a complete graph or a complete bipartite graph.
\end{theorem}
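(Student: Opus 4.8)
The plan is to reduce the statement to a purely linear-algebraic question over fields and then extract the classification from the structure of $G$. Since a connected graph on at least two vertices has an edge, a $1$ appears off the diagonal of $D(G,X_G)$, so $I_1(G,X_G)=\langle 1\rangle$; as the determinantal ideals are nested, $I_{k+1}(G,X_G)\subseteq I_k(G,X_G)$, the trivial distance ideals form an initial segment, and hence ``exactly one trivial distance ideal'' is equivalent to $I_2(G,X_G)\neq\langle 1\rangle$. By the arithmetic Nullstellensatz over $\mathbb{Z}[X_G]$ this holds if and only if there are a field $k$ and a point $(c_v)\in k^{V(G)}$ with $\operatorname{rank}_k D(G,(c_v))\le 1$. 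A symmetric matrix of rank $\le 1$ equals $\lambda\mathbf{x}\mathbf{x}^{\top}$ with $\lambda\neq 0$ and no entry of $\mathbf{x}$ zero (each vertex meets an edge and $1\neq 0$ in $k$), so, absorbing $\lambda$ into $\mathbf{x}$ over a quadratic extension, \emph{$I_2(G,X_G)\neq\langle 1\rangle$ iff there are a field $k$ and $\zeta\colon V(G)\to k^{\times}$ with $\zeta_u\zeta_v=d_G(u,v)$ in $k$ for all $u\neq v$.} Thus it suffices to show such a $\zeta$ exists over some field precisely when $G$ is complete or complete bipartite. The ``if'' direction is quick: for $K_n$, $\zeta\equiv 1$ works over $\mathbb{Q}$ (equivalently, $D(K_n,X_G)$ at $X_G=\mathbf 1$ is $J_n$, of rank $1$; one may also cite Theorem~\ref{distanceidealscompletegraphs}); for $K_{a,b}$, evaluating at $X_G=\mathbf 2$ gives $D(K_{a,b})+2I=P\left(\begin{smallmatrix}2&1\\1&2\end{smallmatrix}\right)P^{\top}$ with $P$ the $n\times 2$ part-indicator matrix, so every $2\times 2$ minor is a multiple of $\det\left(\begin{smallmatrix}2&1\\1&2\end{smallmatrix}\right)=3$; hence the image of $I_2(K_{a,b},X_G)$ under $X_G\mapsto\mathbf 2$ lies in $3\mathbb{Z}\subsetneq\mathbb{Z}$ (equivalently, $\zeta$ exists over $\mathbb{F}_9$).

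For the ``only if'' direction, assume $G$ is connected, neither complete nor complete bipartite, and suppose $\zeta$ exists over some field $k$. \emph{Diameter.} If $x_0x_1x_2x_3$ begins a shortest path of length $3$, then cancelling $\zeta_{x_1}$ in $\zeta_{x_0}\zeta_{x_1}=\zeta_{x_2}\zeta_{x_1}=1$ gives $\zeta_{x_0}=\zeta_{x_2}$, so $\zeta_{x_0}^2=\zeta_{x_0}\zeta_{x_2}=2$; likewise $\zeta_{x_3}=\zeta_{x_1}$ and $\zeta_{x_1}^2=2$; but $\zeta_{x_0}\zeta_{x_1}=1$ forces $\zeta_{x_1}^2=\zeta_{x_0}^{-2}=1/2$, hence $2=1/2$, i.e.\ $\operatorname{char}k=3$, and then $\zeta_{x_0}\zeta_{x_3}=\zeta_{x_0}\zeta_{x_1}=1$ while $d_G(x_0,x_3)=3=0$ in $k$ --- impossible. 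So $\operatorname{diam}(G)=2$ (it is not $\le 1$, as $G\neq K_n$) and every distance is $1$ or $2$. \emph{Neighborhood structure.} Fix a vertex $p$; then $\zeta_u=\zeta_p^{-1}$ for $u\in N(p)$ and $\zeta_u=2\zeta_p^{-1}$ for $u\in\bar N(p):=V(G)\setminus N[p]$. If $\bar N(p)=\emptyset$ then $p$ is universal and all pairs inside $N(p)$ have common distance $\zeta_p^{-2}\in\{1,2\}$, so $G=K_n$ or $G=K_{1,n-1}$, both excluded; hence $\bar N(p)\neq\emptyset$, and by connectedness there is an edge $uu'$ with $u\in N(p)$, $u'\in\bar N(p)$, giving $1=\zeta_u\zeta_{u'}=2\zeta_p^{-2}$, so $\zeta_p^2=2$ for \emph{every} vertex $p$. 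Consequently, for $u,u'\in\bar N(p)$ we get $\zeta_u\zeta_{u'}=4\zeta_p^{-2}=2$, i.e.\ $u\not\sim u'$: \emph{$\bar N(p)$ is independent for every $p$.}

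Now I finish the ``only if'' direction. A connected bipartite graph of diameter $\le 2$ is complete bipartite (vertices in different parts lie at odd distance, hence are adjacent), which is excluded, so $G$ is non-bipartite; since $\operatorname{diam}(G)=2$, its shortest odd cycle has length $3$ or $5$ (if it had length $\ge 7$, two of its vertices at cyclic distance $3$, being non-adjacent, would be joined by a length-$2$ path outside the cycle, yielding a $5$-cycle). If $G$ has a triangle $qrs$, then $\zeta_q^2=(\zeta_q\zeta_r)(\zeta_q\zeta_s)(\zeta_r\zeta_s)^{-1}=1$, contradicting $\zeta_q^2=2$. If $G$ has an induced $5$-cycle $v_1v_2v_3v_4v_5$, then $v_3,v_4\in\bar N(v_1)$ while $v_3\sim v_4$, contradicting that $\bar N(v_1)$ is independent. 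Either way $\zeta$ cannot exist, whence $I_2(G,X_G)=\langle 1\rangle$, as desired.

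Essentially all the work is in the ``only if'' direction. Its subtlety is that the relations $\zeta_u\zeta_v=d_G(u,v)$ are not contradictory on their own --- they are solvable for every $K_{a,b}$ over $\mathbb{F}_9$ --- so the hypothesis ``$G$ is not complete bipartite'' must be used essentially, not arithmetically; it enters through the diameter bound and the ``triangle or induced $C_5$'' dichotomy, after which the \emph{uniform} value $\zeta_p^2=2$ (forced on every vertex by connectedness and non-universality) collides with $\zeta_q^2=1$ at a triangle, or with the independence of $\bar N(v_1)$ at an induced $C_5$. The two steps most likely to need care are establishing $\operatorname{diam}(G)\le 2$ and checking that the ``triangle or induced $C_5$'' alternative is genuinely exhaustive for diameter-$2$ graphs. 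The reformulation and the ``if'' direction are routine.
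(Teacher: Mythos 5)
Your argument is correct, but it cannot be "the same as the paper's" for a simple reason: the paper states this theorem as an imported result, cited from the reference \cite{at} (Alfaro and Taylor, \emph{Distance ideals of graphs}), and contains no proof of it. So what you have produced is an independent, self-contained proof, and I checked it step by step: the reduction of ``exactly one trivial ideal'' to $I_2\neq\langle 1\rangle$ (via nestedness of determinantal ideals and the presence of an off-diagonal $1$), the passage to a rank-$\le 1$ point over a field via a maximal ideal containing $I_2$, the normalization of a nonzero symmetric rank-one matrix to $\zeta\zeta^{\top}$ with all $\zeta_v\neq 0$, the verification for $K_n$ and $K_{a,b}$ (the $\det=3$ Cauchy--Binet computation, equivalently solvability over $\mathbb{F}_9$), the diameter-$2$ reduction, the forced value $\zeta_p^2=2$ at every non-universal vertex together with the independence of $\bar N(p)$, and the exhaustive ``triangle or induced $C_5$'' dichotomy for non-bipartite diameter-$2$ graphs all hold up; the only cosmetic caveats are the degenerate case $K_1$ (which has zero, not one, trivial ideals -- a boundary issue already present in the statement) and the phrase ``outside the cycle'' in the odd-girth argument, which is harmless since the length-$5$ closed walk contains a short odd cycle regardless of where the common neighbor sits. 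Compared with the route in \cite{at}, which (as the paper's concluding remarks about minimal forbidden induced subgraphs suggest) proceeds by structural/forbidden-subgraph analysis of the ideal $I_2$ itself, your reformulation as the solvability of the multiplicative system $\zeta_u\zeta_v=d_G(u,v)$ over some field is arguably more transparent: it turns the commutative-algebra question into an elementary combinatorial one, makes the role of characteristic $3$ for complete bipartite graphs visible, and localizes exactly where the hypotheses ``not complete'' and ``not complete bipartite'' are used. The price is that it is specific to the $k=2$ level and does not obviously generalize to classifying graphs with two trivial distance ideals, where the forbidden-subgraph machinery is the natural tool.
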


Therefore, if $G$ is a connected graph such that its signless distance Laplacian matrix has at most one invariant factor equal to $1$, then $G$ is a complete graph or a complete bipartite graph.
We have already seen that the SNF of $D^Q(K_n)$ is $1\oplus (n-2){\sf I}_{n-2}\oplus 2(n-1)(n-2)$.
From which follows that complete graphs with at least 4 vertices have one invariant factor of SNF of $D^Q$ equal to one.
Next we prove that these graphs are the only graphs whose signless distance Laplacian matrix have one invariant factor equal to 1.

\begin{theorem}
Let $G$ be a connected graph.
The SNF of $D^Q(G)$ has at most one invariant factor equal to 1 if and only if $G$ is a complete graph with $n\neq 3$ vertices. 
\end{theorem}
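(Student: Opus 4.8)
The plan is to combine Theorem~\ref{teo:clasificationofgraphswith1trivialdistance} with the monotonicity observation (number of invariant factors equal to $1$ of $D^Q(G)$ is at least the number of trivial distance ideals of $G$) to reduce to a short list of candidates, and then check each candidate directly. First, suppose $G$ is a connected graph whose $\SNF(D^Q(G))$ has at most one invariant factor equal to $1$. Then $G$ has at most one trivial distance ideal, so by Theorem~\ref{teo:clasificationofgraphswith1trivialdistance}, $G$ is either a complete graph $K_n$ or a complete bipartite graph $K_{a,b}$. So the whole argument splits into two cases: rule out (most) complete bipartite graphs, and pin down exactly which complete graphs work.

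For the complete graphs, we already have from Theorem~\ref{distanceidealscompletegraphs} (via the evaluation $x_v = n-1$) that $\SNF(D^Q(K_n)) = 1 \oplus (n-2)\sf{I}_{n-2} \oplus 2(n-1)(n-2)$. I would examine small cases explicitly: for $n=1$ the matrix is trivial; for $n=2$, $D^Q(K_2)$ has $\SNF$ with two invariant factors equal to $1$ (indeed $D^Q(K_2) = \begin{bmatrix} 1 & 1 \\ 1 & 1\end{bmatrix}$), so $K_2$ fails; for $n=3$, since $n-2 = 1$, the middle block $\sf{I}_{n-2}$ contributes another $1$, giving two invariant factors equal to $1$, so $K_3$ fails — this is exactly the excluded value. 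For $n \ge 4$ we have $n-2 \ge 2$, so the only invariant factor equal to $1$ is the first one, and $K_n$ satisfies the condition. This handles the ``if'' direction and half of the ``only if'' direction.

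For the complete bipartite graphs $K_{a,b}$ with $a \le b$ (and $a+b \ge 2$), I need to show that $\SNF(D^Q(K_{a,b}))$ has at least two invariant factors equal to $1$, \emph{except} possibly when $K_{a,b}$ coincides with an admissible complete graph — but $K_{1,1} = K_2$ already fails, and no other $K_{a,b}$ is complete, so in fact I must show \emph{every} complete bipartite graph fails. The distance matrix of $K_{a,b}$ is well understood: vertices in the same part are at distance $2$, vertices in different parts at distance $1$, so $D(K_{a,b}) = 2(J - I) - (\text{adjacency between parts shifted})$, more precisely $D = J_n + (J - I - A)$ block-structured. The transmissions are $tr(u) = 2(a-1) + b$ for $u$ in the part of size $a$ and $tr(u) = 2(b-1)+a$ for $u$ in the part of size $b$, so $D^Q(K_{a,b}) = \diag(tr) + D$ has a clean $2\times 2$ block form with constant blocks. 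The key step is to show this matrix has at least a $2$-dimensional ``unit part'' in its Smith normal form: e.g., by exhibiting two disjoint $2\times 2$ submatrices (or a suitable sequence of minors) whose gcd chain forces $f_1 = f_2 = 1$, or equivalently by row/column reducing within $\GL(\mathbb{Z})$ to expose an $\sf{I}_2$ block. Concretely, differences of two rows indexed by vertices in the same part, and of two rows indexed by vertices in different parts, produce vectors with small entries ($0, \pm 1, \pm 2$) whose combinations yield $\pm 1$ minors of size $2$; one should be able to extract a $2\times 2$ integer submatrix of determinant $\pm 1$.

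The main obstacle I anticipate is the complete bipartite case: unlike $K_n$, we do not have a ready-made formula for the distance ideals of $K_{a,b}$ in the excerpt (only Theorem~\ref{teo:clasificationofgraphswith1trivialdistance} guaranteeing at least one trivial ideal), so the count of invariant factors equal to $1$ for $D^Q(K_{a,b})$ must be established from scratch. The cleanest route is probably to compute $\SNF(D^Q(K_{a,b}))$ in closed form by explicit unimodular row/column operations exploiting the two-block constant structure — subtract adjacent equal rows within each part to create many rows of the form $(0,\dots,0,\pm t, \mp t, 0, \dots)$, clear them out, and reduce to a small ($3\times 3$ or $4\times 4$) core matrix whose Smith normal form can be read off — and then simply observe that the resulting diagonal has at least two $1$'s for all $a,b$ with $a+b\ge 2$. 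A convenient shortcut: it suffices to find \emph{any} $2\times 2$ submatrix of $D^Q(K_{a,b})$ with determinant $\pm1$ whose complementary structure still leaves the first two invariant factors forced to $1$; since $D^Q$ has a $1$ on part of its structure only via the diagonal transmission terms, and off-diagonal entries are $1$'s and $2$'s, $2\times 2$ minors like $\det\begin{bmatrix} 1 & 2 \\ 1 & tr\end{bmatrix}$-type evaluations should readily give $\gcd = 1$ at levels $k=1$ and $k=2$, completing the argument.
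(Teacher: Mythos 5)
Your overall strategy is the same as the paper's: a unit invariant factor of $D^Q(G)$ forces a trivial distance ideal, so Theorem~\ref{teo:clasificationofgraphswith1trivialdistance} reduces everything to complete and complete bipartite graphs; the complete case is read off from $\SNF(D^Q(K_n))=1\oplus(n-2){\sf I}_{n-2}\oplus 2(n-1)(n-2)$; and the complete bipartite case must be eliminated by showing the second invariant factor is $1$. But there are two concrete problems. First, your $K_2$ computation is wrong: the all-ones $2\times 2$ matrix has rank $1$, so $\SNF(D^Q(K_2))=\diag(1,0)$, which has exactly \emph{one} invariant factor equal to $1$. Hence $K_2$ satisfies the condition, exactly as the theorem's ``$n\neq 3$'' requires; your claim that $K_2$ fails contradicts the very statement you are proving, and it also breaks your plan to show that \emph{every} complete bipartite graph fails, since $K_{1,1}=K_2$ does not fail.

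Second, the complete bipartite case on at least $3$ vertices --- the only part the paper actually proves inside the theorem environment --- is left as a hopeful sketch, and the shortcut you lean on is not available: in general $D^Q(K_{a,b})$ has \emph{no} $2\times2$ submatrix of determinant $\pm1$. For $K_{2,3}$ the entries are $1,2,5,6$, and checking which $2\times 2$ entry patterns are realizable (an entry $1$ occurs only across parts, an entry $2$ only within a part, $5$ and $6$ only on the diagonal) shows that no $2$-minor equals $\pm1$; one instead finds minors equal to $3$ and to $4$, so $\Delta_2=1$ only as a gcd of several minors. That gcd computation is precisely what is missing from your write-up. The paper supplies it by quoting the second distance ideal of $K_{m,n}$ from \cite[Theorem 23]{at} and evaluating at the transmissions: for $K_{m,1}$ with $m\ge2$ this yields $\langle 2m-3,\,2m-1\rangle=\langle1\rangle$ (the two generators differ by $2$ and are odd), and for $K_{m,n}$ with $m,n\ge2$ it yields $\langle 2m+n-4,\,m+2n-4,\,3\rangle=\langle1\rangle$ since the sum of the first two generators is $3(m+n)-8\equiv1\pmod 3$. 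Your direct approach is equivalent in principle, because the evaluated second distance ideal is generated by the $2$-minors of $D^Q(K_{m,n})$, but the decisive arithmetic has to be carried out; note also that differences of rows from different parts contain an entry $tr(u)-2$, not entries confined to $\{0,\pm1,\pm2\}$ as your sketch asserts.
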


\begin{proof}
It only remains to verify that the second invariant factor of complete bipartite graphs is equal to one.
In \cite[Theorem 23]{at}, the second distance ideal of complete bipartite graphs were computed.
Let $D(K_{m,n},\{x_1, \dots, x_m, y_1, \dots, y_n\})$ be the generalized distance matrix of $K_{m,n}$, which is equal to the following matrix
\[
    \begin{bmatrix}
        \diag(x_1,\dots,x_m)-2{\sf I}_m+2{\sf J}_m & {\sf J}_{m,n}\\
        {\sf J}_{n,m} & \diag(y_1,\dots,y_n)-2{\sf I}_n+2{\sf J}_n\\
    \end{bmatrix}.
\]
If $m\geq2$ and $n=1$, then
\[
I_2(K_{m,1},\{x_1, \dots, x_m, y_1\})=\langle x_1-2, \dots, x_m-2,2y_1-1\rangle.
\]
After evaluating the ideal at $x_i=2m-1$, and $y_1=m$, we obtain the ideal $\langle2m-3,2m-1\rangle\subseteq \mathbb{Z}$.
Since the $\gcd(2m-3,2m-1)=1$, it follows that the second invariant factor of $\SNF(D^Q(K_{m,1}))$ is 1.
If $m\geq2$ and $n\geq2$, then
\[
I_2(K_{m,n},\{x_1, \dots, x_m, y_1, \dots, y_n\})=\langle x_1-2, \dots, x_m-2, y_1-2, \dots, y_n-2,3\rangle.
\]
After evaluating the ideal at $x_i=2m+n-2$ and $y_i=2n+m-2$, we obtain the ideal $\langle2m+n-4,m+2n-4,3\rangle\subseteq \mathbb{Z}$.
Since the $\gcd(2m+n-4,m+2n-4,3)=1$, it follows that the second invariant factor of $\SNF(D^Q(K_{m,n}))$ is 1.
\end{proof}

Now we are ready to state the main result of this section.

\begin{corollary}\label{teo:completegraphsaredeterminedbyDQ}
Complete graphs are determined by the SNF of the signless distance Laplacian matrix.
\end{corollary}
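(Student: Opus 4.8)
The plan is to reduce the statement to the previously stated results about the signless distance Laplacian and the distance ideals. The corollary asserts that if $H$ is a connected graph with $\SNF(D^Q(H)) = \SNF(D^Q(K_n))$ for some $n$, then $H \cong K_n$. I would first dispose of the small cases $n \in \{1, 2, 3\}$ by direct inspection: $K_1$ and $K_2$ are the only connected graphs on $1$ and $2$ vertices, and for $n = 3$ one checks the finitely many connected graphs on $3$ vertices ($K_3$ and $P_3$) and their $D^Q$ Smith normal forms directly. So assume $n \geq 4$.

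For $n \geq 4$, the key observation is that $\SNF(D^Q(K_n)) = 1 \oplus (n-2){\sf I}_{n-2} \oplus 2(n-1)(n-2)$, as computed in the discussion preceding the theorem above; in particular, it has exactly one invariant factor equal to $1$. Therefore, if $\SNF(D^Q(H)) = \SNF(D^Q(K_n))$, then $D^Q(H)$ also has exactly one invariant factor equal to $1$, so in particular \emph{at most} one. By the theorem just proved above, this forces $H$ to be a complete graph $K_m$ with $m \neq 3$, or $H$ a complete bipartite graph $K_{a,b}$. The first step, then, is to invoke that theorem to cut the candidate set down to $\{K_m : m \neq 3\} \cup \{K_{a,b}\}$.

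It remains to pin down which member of this short list has the prescribed Smith normal form. Since the Smith normal form records the number of vertices as the size of the matrix, $H$ has $n$ vertices, so $H$ is either $K_n$ or some $K_{a,b}$ with $a + b = n$. I would then compare the $n$-th (largest) invariant factor, i.e.\ $|\det D^Q|$ up to sign: for $K_n$ this is $2(n-1)(n-2)$, whereas for $K_{a,b}$ one can compute $|\det D^Q(K_{a,b})|$ directly (the matrix has the block form displayed in the proof above, and its determinant is a small explicit polynomial in $a, b$). Showing $|\det D^Q(K_{a,b})| \neq 2(n-1)(n-2)$ whenever $a+b=n$, $a,b \geq 1$, and $(a,b) \neq$ the degenerate complete-graph case, rules out all the complete bipartite graphs and leaves only $H \cong K_n$. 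Alternatively, one could compare the product $(n-2){\sf I}_{n-2}$-part: $D^Q(K_n)$ has $\gcd$ of its $(n-1)$-minors divisible by a high power of $(n-2)$, which complete bipartite graphs will generically fail to match.

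The main obstacle I anticipate is the last step: verifying that no complete bipartite graph $K_{a,b}$ with $a+b=n$ can accidentally share the full Smith normal form of $D^Q(K_n)$. This is a finite but slightly fiddly determinant/gcd computation, and one must be careful about the boundary cases $K_{1,n-1}$ (the star) and whether $K_{1,1}=K_2$ is being double-counted. If a clean closed form for $\SNF(D^Q(K_{a,b}))$ is awkward to extract, a cleaner route is to note that the second-largest invariant factor of $D^Q(K_n)$ equals $n-2$ and is large, while for $K_{a,b}$ the penultimate invariant factor can be bounded above by a smaller quantity (e.g.\ via the transmissions, which for $K_{a,b}$ are $2a+b-2$ and $a+2b-2$, bounded by roughly $2n$, versus the repeated factor $n-2$ appearing $n-2$ times), giving a contradiction for $n$ large and leaving only finitely many small $n$ to check by hand.
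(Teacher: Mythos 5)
Your overall strategy is the right one and matches how the paper intends the corollary to follow: match matrix sizes, count invariant factors equal to $1$, invoke the preceding theorem, and check small $n$ by hand. However, you misquote the theorem you are invoking. The theorem proved immediately before the corollary states that $\SNF(D^Q(G))$ has at most one invariant factor equal to $1$ \emph{if and only if} $G$ is a complete graph on $n\neq 3$ vertices; complete bipartite graphs do \emph{not} appear in its conclusion (they appear only in the intermediate Theorem~\ref{teo:clasificationofgraphswith1trivialdistance} on trivial distance ideals, and the whole point of the theorem's proof is to eliminate them by showing their second invariant factor equals $1$). Applied as actually stated, that theorem immediately forces $H\cong K_m$ with $m\neq 3$, and the matrix size gives $m=n$; there is no residual family $\{K_{a,b}\}$ to rule out. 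Consequently, the entire last portion of your argument --- the comparison of $\lvert\det D^Q(K_{a,b})\rvert$ with $2(n-1)(n-2)$, which you describe as ``fiddly'' and do not carry out --- is unnecessary, and it is fortunate that it is, because as written it is the one genuinely incomplete step of your proposal. Your handling of $n\le 3$ by direct inspection is needed (since $\SNF(D^Q(K_3))=\operatorname{diag}(1,1,4)$ has two unit invariant factors and falls outside the theorem) and is correct: the only competitor $P_3$ has $\SNF(D^Q(P_3))=\operatorname{diag}(1,1,8)$. In short: delete the complete-bipartite branch, cite the preceding theorem with its actual conclusion, and your proof coincides with the paper's.
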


It would be interesting to obtain a characterization of graphs whose SNF of $D^L$ and $D^Q$ has two invariant factors equal to 1. 
This could be obtained after showing a characterization of graphs having two trivial distance ideals. However, the latter problem seems to be difficult, since there exist infinitely many minimal induced forbidden graphs \cite{alfaro} (most of them are the same needed in the characterization of the well-known Strong Perfect Graph Theorem).

\section{Concluding remarks}

While the adjacency, Laplacian and signless Laplacian matrices have attracted a lot of attention in the field of spectral characterizations of graphs, for such matrices, the SNF does not seem useful to distinguish graphs, since almost all graphs on 10 vertices have a coinvariant mate. However, our enumeration results suggest that the SNF of the distance Laplacian and the signless distance Laplacian matrices could be a finer invariant to distinguish graphs in cases where other algebraic invariants, such as those derived from the spectrum, fail. This confirms what was suggested by Biggs \cite{biggs1999}. Another argument to consider the SNF as a graph parameter to distinguish graphs is that this is a finer invariant than the $p$-rank: the $p$-rank is just the number of invariant factors not divisible by $p$.

In this work we show that the results by Aouchiche and Hansen \cite{ah} can be pushed further. In particular, we provide numerical evidence that using the invariant factors of the SNF of certain distance matrices one can improve some of the results by Aouchiche and Hansen. In this regard, our computational results suggest that possibly almost no graph has a coinvariant mate when $n\rightarrow \infty$ for the matrices $D^L$ and $D^Q$.


\section*{Acknowledgements}
The authors would like to thank Willem Haemers for a careful reading of the manuscript and for useful comments. The research of A. Abiad is partially supported by the BOF-UGent (Special Research Fund of Ghent University). The research of C. Alfaro is partially supported by SNI and CONACyT.

\end{document}